\theoremstyle{plain}
\newtheorem{theorem}{Theorem}[section]
\newtheorem{proposition}{Proposition}[section]
\newtheorem{lemma}{Lemma}[section]
\newtheorem{remark}{\it Remark}[section]
\theoremstyle{definition}
\newcommand{\rot}{\mathop\mathrm{rot}}
\newcommand{\grad}{\mathop\mathrm{grad}}
\renewcommand{\div}{\mathop\mathrm{div}}
\newcommand{\Tr}{\mathop\mathrm{Tr}}
\renewcommand{\Re}{\mathop\mathrm{Re}}
\title
[   Lieb--Thirring inequalities on some manifolds ]
 { Lieb--Thirring inequalities on some  manifolds}
\author[A.A.Ilyin]{Alexei A.~Ilyin}
\address
{Keldysh Institute of Applied Mathematics
}
\email{ilyin@keldysh.ru}
\begin{document}

\maketitle

\medskip

\bigskip
\begin{quote}{\normalfont\fontsize{8}{10}\selectfont
{\bfseries Abstract.}
We prove Lieb--Thirring inequalities with improved
constants on the two-dimensional
sphere $\mathbb{S}^2$ and the two-dimensional torus
$\mathbb{T}^2$. In the one-dimensional periodic case
we obtain a simultaneous bound for the negative trace and
the number of negative eigenvalues.

\medskip

\noindent
 \textbf{Key words:} Lieb--Thirring inequalities,
Schr\"odinger operators.

\noindent \textbf{AMS  subject
classification:} 35P15, 26D10.

\par}
\end{quote}

\setcounter{equation}{0}
\section{Introduction}\label{S:Intro}
The Schr\"odinger operator in $L_2(\mathbb{R}^n)$
$$
-\Delta+V
$$
with a real-valued  potential $V$ that sufficiently fast decays at infinity
has a discrete  negative
spectrum satisfying the Lieb--Thirring
spectral inequalities~\cite{LT}

\begin{equation}\label{est L-T}
\sum_{\nu_i\le0}|\nu_i|^\gamma\le\mathrm{L}_{\gamma,n}
 \int V_-(x)^{\gamma+ n/2}dx,
\end{equation}
where $V_\pm(x)=(|V(x)|\pm V(x))/2$. The Lieb--Thirring constant  $\mathrm{L}_{\gamma,n}$
is finite for $\gamma\ge 1/2$, $n=1$ (for $\gamma=1/2$
see~\cite{Weidl}); $\gamma>0$, $n=2$; and $\gamma\ge0$,
$n\ge3$ (where $\gamma=0$ is the Lieb--Cwikel--Rozenblum
inequality).

The Lieb--Thirring constants satisfy the lower bound
\begin{equation}\label{L1nclass}
\mathrm{L}_{\gamma,n}\ge \mathrm{L}_{\gamma,n}^{\mathrm{cl}}=
\frac1{(2\pi)^n}\int_{\mathbb{R}^n}(1-|\xi|)_+^\gamma dx=
\frac{\Gamma(\gamma+1)}
{(4\pi)^{n/2}\Gamma(n/2+\gamma+1)}\,.
\end{equation}
Sharp results valid for all dimensions $n$, $\mathrm{L}_{\gamma,n}=
\mathrm{L}_{\gamma,n}^{\mathrm{cl}}$, $\gamma\ge3/2$ were
obtained in~\cite{Lap-Weid} (see also~\cite{B-L}).
 The best known estimate of $\mathrm{L}_{\gamma,n}$
for $1\le\gamma<3/2$ from \cite{D-L-L} is as follows
\begin{equation}\label{R=1.81}
\mathrm{L}_{\gamma,n}\le
R\cdot\mathrm{L}_{\gamma,n}^{\mathrm{cl}},
\quad R=\frac\pi{\sqrt{3}}=1.8138\dots
\end{equation}
and improves the previous result~\cite{H-L-W}: $R=2$.

The spectral inequality~(\ref{est L-T}) for the negative trace
(that is, for $\gamma=1$) is equivalent to the following
integral inequality for orthonormal families.
Let $\{\varphi_j\}_{j=1}^N\in H^1(\mathbb{R}^n)$ be an
orthonormal family in $L_2(\mathbb{R}^n)$. Then
$\rho(x):=\sum_{j=1}^N\varphi_j(x)^2$ satisfies the
inequality
\begin{equation}\label{L-T-orth-orig}
\int\rho(x)^{1+2/n}dx\le
\mathrm{k}_{n}\sum_{j=1}^N\|\nabla\varphi_j\|^2,
\end{equation}
where the best constants $\mathrm{k}_{n}$ and  $\mathrm{L}_{1,n}$
satisfy \cite{LT}, \cite{Lieb}
\begin{equation}\label{k=L}
\mathrm{k}_{n}=(2/n)(1+n/2)^{1+2/n}\mathrm{L}_{1,n}^{2/n}.
\end{equation}

In addition to the initial quantum mechanical applications
inequality~(\ref{L-T-orth-orig}) is very important in the
theory of infinite dimensional dynamical systems,
especially, for the attractors of the Navier--Stokes equations
(see, for instance, \cite{Lieb}, \cite{B-V}, \cite{Ch-V-book},
\cite{CF88}, \cite{T} and the references therein).
Accordingly, for satisfying these needs
Lieb-Thirring inequalities~(\ref{L-T-orth-orig}) were
generalized to higher-order elliptic operators on domains
with various boundary conditions and Riemannian manifolds
\cite{G-M-T}, \cite{T}. However, no information was
available on the values of the corresponding constants.
A different approach to the Lieb-Thirring inequalities for
periodic functions, based on the methods of trigonometric
series, was proposed in~\cite{Kashin}.

In this article we shall be dealing with Lieb--Thirring
inequalities on manifolds. We consider the two-dimensional
torus $\mathrm{T}^2=[0,2\pi]^2$ (with flat metric)
and the two-dimensional sphere $\mathbb{S}^2$.
Below we denote by  $M$ either $\mathrm{T}^2$ or
 $\mathrm{S}^2$.
Both the scalar  and  vector-functions are considered.
We first observe that for scalar functions
inequality~(\ref{L-T-orth-orig}) cannot hold unless we
somehow get rid of the constants, and we assume that
the $\varphi_j$'s satisfy
\begin{equation}\label{int=0}
\int_M\varphi dM=0.
\end{equation}
Accordingly,
the Schr\"odinger operator is of the form
\begin{equation}\label{Int_Sch}
-\Delta\varphi+\Pi(V\varphi),
\quad\text{where}\quad
\Pi f=f-\frac1{|M|}\int_{M}fdM,
\end{equation}
and $|M|$ denotes the measure of $M$.
In section~\ref{S:S2T2} we  obtain a bound for the negative
trace of the operator~(\ref{Int_Sch}) on $M$
$$
 \sum_{\nu_j\le0}|\nu_j|\le
   \mathrm{L}_1(M)\int_{M}V_-(x)^2dM\quad\text{with}\quad
\mathrm{L}_{1}(M)\le\frac38.
$$
It is worth pointing out that we obtain the same bound as
in the original paper~\cite{LT}  for the constant
$\mathrm{L}_{1,2}(\mathbb{R}^2)$. As in~\cite{LT} we use
the Birman--Schwinger  kernel (see also~\cite{T}). The
current best known results~(\ref{R=1.81}) for
$\mathbb{R}^n$ are, of course, much sharper. However, the
argument in~\cite{AL} and  induction in the
dimension~\cite{D-L-L}, \cite{Lap-Weid}, \cite{H-L-W} are
not directly  applicable to the case of the torus and the
sphere because of the global condition(\ref{int=0})
(especially since on the sphere
 there is no global coordinate system without singular points).

Next, we consider the case of vector-functions
and show that
\begin{equation}\label{vector}
\mathrm{L}_{1}^{\mathrm{vec}}(M)\le\frac34.
\end{equation}
This is, of course, obvious for the torus since the vector
Laplacian acts independently on the two components of
vector-functions. This is not the case for the sphere,
but~(\ref{vector}) still holds. We also observe that
for the sphere (as for any simply connected manifold)
we do not need any orthogonality conditions
and the (negative) vector Laplacian is strictly positive on
$\mathbb{S}^2$. Using the one-to-one
correspondence between divergence-free and potential vector
fields inherent in two dimensions we show that in the
divergence-free case the bound for the corresponding
Lieb--Thirring constant is the same as in the scalar case.
Finally, in the three-dimensional case we prove the
inequality for the negative trace for $\mathbb{T}^3$ with the original
Lieb--Thirring constant $\frac4{15\pi}$~\cite{LT}
and some $1.039\%$ larger constant for $\mathbb{S}^3$.

In section~\ref{S:1D} we consider the one-dimensional case.
Using the idea of C.\,Foias \cite[p.\,440]{T}
(see also \cite{E-F}) and a recent refinement~\cite{Zelik} of the
multiplicative inequality characterizing the
imbedding
$\dot{H}^1(\mathbb{S}^1)\hookrightarrow L_\infty(\mathbb{S}^1)$
we obtain for the operator
$$
-\frac{d^{2}\varphi}{dx^{2}}+\Pi(V\varphi),
$$
acting on $2\pi$-periodic functions with mean value zero
 the following simultaneous bound for the negative
trace and the number $N$ of negative eigenvalues:
$$
\sum_{j=1}^N|\nu_j|+N\frac1{\pi^2}\le
\frac{2}{3\sqrt{3}}
\int_0^{2\pi}V(x)_-^{3/2}dx.
$$

In section~\ref{S:Aux} we prove two main technical results
concerning a series and a 2D lattice sum depending on a
parameter. Corresponding to these sums in $\mathbb{R}^n$
are the integrals depending on a parameter which are easily calculated by
scaling. The previous (knowingly non-sharp) estimates for
these sums in~\cite{ILMS93}, \cite{I-M-T} give,
respectively,
$\mathrm{L}_{1}(\mathbb{S}^2)\le1/2$
and
$\mathrm{L}_{1}(\mathbb{T}^2)\le 3/({2\pi})$.

In conclusion we  recall the basic facts concerning the Laplace
operator on the sphere~\cite{S-W}.
Let $\mathbb{S}^{m-1}$ be the $(m-1)$-dimensional sphere. We have
for the (scalar) Laplace-Beltrami operator
$\Delta=\div\grad$:
$$
-\Delta Y_n^k=\Lambda_n Y_n^k,\quad
k=1,\dots,k_m(n),\quad n=1,2,\dots.
$$
Here the $Y_n^k$ are the orthonormal spherical harmonics.
Each eigenvalue
$$
\Lambda_n=n(n+m-2)
$$
has multiplicity
$$
k_m(n)=\frac{2n+m-2}{n}\binom{n+m-3}{n-1}.
$$
For example, for $m=2,3,4$ we have
\begin{equation}\label{eig_mult}
\aligned
&\mathbb{S}^1:\ \Lambda_n=n^2,\ k_2(n)=2,\\
&\mathbb{S}^2:\ \Lambda_n=n(n+1),\ k_3(n)=2n+1,\\
&\mathbb{S}^3:\ \Lambda_n=n(n+2),\ k_4(n)=(n+1)^2.
\endaligned
\end{equation}
The following identity is essential \cite{S-W}:
for any $s\in\mathbb{S}^{m-1}$
\begin{equation}\label{identity}
\sum_{l=1}^{k_m(n)}Y_n^l(s)^2=\frac{k_m(n)}{\sigma(m)},
\end{equation}
where $\sigma(m)=2\pi^{m/2}/\Gamma(m/2)$ is the surface area of $S^{m-1}$. In the
vector case we have the similar identity for the gradients
of spherical harmonics \cite{I93}: for any $s\in\mathbb{S}^{m-1}$
\begin{equation}\label{identity-vec}
\sum_{l=1}^{k_m(n)}|\nabla Y_n^l(s)|^2=\Lambda_n\frac{k_m(n)}{\sigma(m)},
\end{equation}

We also use the following notation labelling the
eigenfunctions and the corresponding eigenvalues with a
single subscript
\begin{equation}\label{single}
-\Delta\varphi_i=\lambda_i\varphi_i,
\end{equation}
where
$$
\{\varphi_i\}_{i=1}^\infty=\{
Y_n^1,\dots,Y_n^{k_m(n)}\}_{n=1}^\infty,\quad
\{\lambda_i\}_{i=1}^\infty=
\underset{\!k_m(n)\ \text{times}}{\{\Lambda_n,
\dots,\Lambda_n\}_{n=1}^{\infty}}.
$$

\setcounter{equation}{0}
\section{Lieb--Thirring inequalities on the sphere
and on the torus}\label{S:S2T2}

In this section we obtain estimates for the negative trace
of the Schr\"odinger operators on the $2D$ sphere $\mathbb{S}^2$
and the $2D$ torus $\mathbb{T}^2=[0,2\pi]^2$. Both cases
are treated simultaneously and we denote below by ${M}$ one
of these manifolds. With a slight abuse of notation a
generic point $x\in\mathbb{T}^2$ and $s\in\mathbb{S}^2$
is denoted by $x$.

For  $V\in L_2(M)$ we consider the quadratic form on
$\dot{H}^1(M)$
\begin{equation}\label{Qf}
    Q_V(h)=\|\nabla h\|^2+\int_{M} V(x) h(x)^2dM,\qquad h\in
    \dot{H}^1(M).
\end{equation}
Here and in what follows $\dot{H}^1(M)$ denotes the
subspace of the Sobolev space ${H}^1(M)$ of functions orthogonal to
constants. The form~(\ref{Qf})  is bounded from below
 and defines the self-adjoint   Schr\"odinger-type  operator
\begin{equation}\label{Schr}
    -\Delta h+\Pi (V h),\quad h\in \dot{H}^1(M)
\end{equation}
with discrete spectrum $\nu_1\le\nu_2\le\dots\to\infty$ accumulating
at infinity.

We estimate the negative trace of~(\ref{Schr}) for
$M=\mathbb{S}^2$ and $M=\mathbb{T}^2$
\begin{equation}\label{LTtrace}
   \sum_{\nu_j\le0}|\nu_j|\le
   \mathrm{L}_1(M)\int_{M}V_-(x)^2dM.
\end{equation}

\begin{theorem}\label{T:L1-scal}
For $M=\mathbb{S}^2$ and $M=\mathbb{T}^2$
\begin{equation}\label{const_L1T2}
\mathrm{L}_1(\mathbb{T}^2)<\frac38,\qquad
\mathrm{L}_1(\mathbb{S}^2)<\frac3{8}\,.
\end{equation}
\end{theorem}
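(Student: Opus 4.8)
The plan is to follow the Birman--Schwinger route of~\cite{LT}, but to exploit the homogeneity of $M$ so that the operator estimate collapses to a single scalar sum over the Laplace eigenvalues $\lambda_i$ from~(\ref{eig_mult}), which is then compared with the corresponding Euclidean integral. First I would write the negative trace by the layer--cake formula
\[
\sum_{\nu_j\le0}|\nu_j|=\int_0^\infty N(-t)\,dt,\qquad N(-t)=\#\{j:\nu_j<-t\},
\]
and bound $N(-t)$ through the Birman--Schwinger operator $K_t=V_-^{1/2}(-\Delta+t)^{-1}V_-^{1/2}$ on the mean--zero subspace, whose kernel is $V_-(x)^{1/2}G_t(x,y)V_-(y)^{1/2}$ with $G_t(x,y)=\sum_i\varphi_i(x)\varphi_i(y)/(\lambda_i+t)$ the Green's function of $-\Delta+t$ on $\dot H^1(M)$. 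The characteristic two--dimensional obstruction shows up immediately: the first--power trace $\Tr K_t=G_t(x,x)\int_M V_-\,dM$ is infinite because the resolvent diagonal $G_t(x,x)=\frac1{|M|}\sum_i(\lambda_i+t)^{-1}$ diverges logarithmically, so the whole argument must be organised around the \emph{square} of the Green's function, for which $\frac1{|M|}\sum_i(\lambda_i+t)^{-2}$ converges.

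Here the homogeneity of $\mathbb{S}^2$ and $\mathbb{T}^2$ is decisive. By the addition identity~(\ref{identity}) (and, on the torus, Parseval for the exponentials) the diagonal of any function of $-\Delta$ is constant on $M$; in particular
\[
\int_M G_t(x,y)^2\,dM(y)=\frac1{|M|}\sum_i\frac1{(\lambda_i+t)^2}=:s(t).
\]
This constancy is exactly what turns the operator bound into a numerical question about the scalar sum $\sum_i(\lambda_i+t)^{-2}$, with eigenvalues and multiplicities taken from~(\ref{eig_mult}): the lattice sum $\sum_{0\ne k\in\mathbb{Z}^2}(|k|^2+t)^{-2}$ for the torus and the series $\sum_{n\ge1}(2n+1)\,(n(n+1)+t)^{-2}$ for the sphere.

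To extract the clean constant I would pass to the variational (dual) form of~(\ref{LTtrace}). Writing $\rho=\sum_j\psi_j^2$ for the density of the negative eigenfunctions $\psi_j$, the trace bound is equivalent, through $\int_M V_-\rho\le\|V_-\|_{L_2}\|\rho\|_{L_2}$ and optimisation in the kinetic energy $\sum_j\|\nabla\psi_j\|^2$, to a density inequality $\int_M\rho^2\,dM\le K\sum_j\|\nabla\psi_j\|^2$; the optimisation converts the constant $K$ into the final factor $K/4$, so that $K=3/2$ is what yields $\tfrac38$. The density inequality in turn is controlled by the constant diagonal $s(t)$, with the free parameter $t$ chosen optimally, and the target value is identified by comparison with the Euclidean integral $\int_{\mathbb{R}^2}(|\xi|^2+t)^{-2}\,d\xi=\pi/t$ (the $\mathbb{R}^n$ integrals being immediate by scaling), which reproduces the constant of the original method of~\cite{LT}.

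The hard part, which I expect to isolate as the technical lemmas of Section~\ref{S:Aux}, is the comparison of the discrete sums with the Euclidean value $\pi/t$, uniformly in the parameter $t>0$. Several features prevent a direct appeal to the known Euclidean estimates: the sums run over a lattice or a spectrum rather than a continuum, so one must control them for \emph{all} $t$, in particular as $t\to0$ where the logarithmic criticality of two dimensions is felt; the global mean--zero constraint removes the zero mode and thereby alters the small--$t$ asymptotics; and on $\mathbb{S}^2$ there is no global chart, so the Euclidean scaling that trivialises the integral is unavailable and one must estimate the series in $n(n+1)$ directly. Proving that these discrepancies are small enough to preserve the bound, and that the optimisation lands precisely on $\tfrac38$ for both manifolds, is the crux of the argument.
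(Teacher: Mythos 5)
Your skeleton is the right one --- Birman--Schwinger on the mean-zero subspace, the addition identity (\ref{identity}) (resp.\ the exponential basis) to make the resolvent diagonal constant, reduction to scalar sums over $n(n+1)$ and over $\mathbb{Z}^2_0$, and a uniform-in-parameter comparison of those sums with a Euclidean integral (this last step is indeed the content of Propositions \ref{P:S2} and \ref{P:T2}). But the central quantitative choice in your plan is wrong, and the argument does not close with it. You propose to organise everything around the \emph{square} of the Green's function, i.e.\ $N(-r)\le\Tr K_r^2$ with $K_r=V_-^{1/2}(-\Delta+r)^{-1}V_-^{1/2}$, followed by integration in $r$. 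After using the constant diagonal, the best this gives is $\Tr K_r^2\le\|V_-\|_{L_2}^2\,s(r)$ with $s(r)=\frac1{|M|}\sum_i(\lambda_i+r)^{-2}\sim\frac1{4\pi r}$ as $r\to\infty$, and $\int^\infty s(r)\,dr$ diverges logarithmically: the unshifted second Birman--Schwinger power does not decay fast enough in $r$ to be integrable. The paper's proof uses two further devices that you have omitted and that are exactly what cure this: (i) the potential inside the kernel is shifted, $g=(V+(1-t)r)_-$ with a splitting parameter $t\in(0,1)$, so that $g(x)$ vanishes for $r>V_-(x)/(1-t)$ and the $r$-integral is cut off; and (ii) the Birman--Schwinger power is taken \emph{fractional}, $k=3/2$ (any $k\in(1,2)$ works), which requires the Araki--Lieb--Thirring convexity inequality $\Tr(A^{1/2}CA^{1/2})^k\le\Tr A^{k/2}C^kA^{k/2}$ to pass to $\Tr[g^k(\Pi(-\Delta+tr)\Pi)^{-k}]$. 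The two are coupled: even with the shift, $k=2$ produces the inner integral $\int_0^\infty(tr)^{-1}(V+(1-t)r)_-^2\,dr$, which diverges at $r=0$; only for $k<2$ is $\int_0^\infty(tr)^{1-k}(V+(1-t)r)_-^k\,dr=t^{1-k}(1-t)^{k-2}B(2-k,1+k)V_-(x)^2$ finite, and the value $3/8$ is obtained by evaluating (\ref{LTg1}) at the optimal $t=k-1$ with $k=3/2$.

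Consequently the auxiliary estimates you would need are not bounds on $\sum(2n+1)(n(n+1)+\mu^2)^{-2}$ and $\sum_{m\ne0}(|m|^2+\mu^2)^{-2}$ against $\pi/\mu^2$ (those power-$2$ sums occur in the paper only in the three-dimensional case, where the moment is $\int V_-^{5/2}$), but the $k=3/2$ bounds $\mu\sum_{n\ge1}(2n+1)(n(n+1)+\mu^2)^{-3/2}<2$ and $\mu\sum_{m\ne0}(|m|^2+\mu^2)^{-3/2}<2\pi$, uniformly in $\mu\ge0$ --- and these are genuinely delicate (an asymptotic expansion via the trapezoidal rule on the sphere, Poisson summation on the torus, plus a verified finite interval). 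Your detour through the dual density inequality $\int_M\rho^2\le K\sum_j\|\nabla\psi_j\|^2$ also runs in the wrong direction: the paper proves the spectral bound first and only then deduces the density inequality via $\mathrm{k}_2=4\mathrm{L}_1$; you would still owe an independent proof that $K\le3/2$, which the constancy of $s(t)$ alone does not supply.
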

\begin{proof}
As usual we first assume that the potential $V$ is smooth.
Having proved~(\ref{LTtrace}) for smooth $V$ we prove the
general case by approximating $V$ with smooth potentials
$V_n$.
We denote by $N_r(V)$ the number of eigenvalues $\nu_j$ such that
 $\nu_j\le r$. Then
\begin{equation}\label{Nr}
\sum_{\nu_j\le0}|\nu_j|^\gamma=\gamma\int_0^\infty r^{\gamma-1}N_{-r}(V)dr.
\end{equation}
We use  the Birman--Schwinger inequality
(see~\cite[Appendix, Proposition 2.1]{T}, where this inequality
is adapted to the Schr\"odinger-type operators defined on subspaces).
Setting $g(x)=(V(x)+(1-t)r)_-$, we have
$$
N_{-r}(V)\le\Tr\bigl[g^{1/2}
(\Pi(-\Delta+tr)\Pi)^{-1}g^{1/2}\bigr]^k,\ \ r>0,\ k\ge1,\
t\in[0,1],
$$
where the trace is calculated in $L_2(M)$.
Next we use  the convexity
inequality of Lieb and
Thirring  \cite{A}, \cite{LT}: for positive operators $A$ and
$C$,
$\Tr (A^{1/2}CA^{1/2})^k\le\Tr A^{k/2}C^kA^{k/2}$. We obtain
$$
N_{-r}(V)\le\Tr\bigl[g^{k/2}
(\Pi(-\Delta+tr)\Pi)^{-k}g^{k/2}\bigr]=\Tr[ g^{k}(\Pi(-\Delta+tr)\Pi)^{-k}],
$$
where the last equality holds for $k>1$, since in this case
the operator $(\Pi(-\Delta+tr)\Pi)^{-k}$ is of trace class
(and multiplication by $g^{k/2}$ is  bounded  in $L_2(M)$).

Now we show that for  $k>1$
($k=3/2$),
\begin{equation}\label{N-r}
N_{-r}(V)\le\frac{1}{4\pi}\frac1{k-1}(tr)^{1-k}
\int_{M}(V(x)+(1-t)r)_-^kdM.
\end{equation}
We first consider the case $M=\mathbb{S}^2$.
Using
the basis~(\ref{single}) and identity
(\ref{identity}), we have
$$
\aligned &\Tr [ g^{k}(\Pi(-\Delta+tr)\Pi)^{-k}]=
\sum_{j=1}^\infty(g^k(-\Delta+tr)^{-k}\varphi_j,\varphi_j)\\=
&\sum_{j=1}^\infty(\lambda_j+tr)^{-k}\int_{\mathbb{S}^2}
g(s)^k\varphi_j(s)^2dS
\\=
&\sum_{n=1}^\infty(\Lambda_n+tr)^{-k}\int_{\mathbb{S}^2}g(s)^k
\sum_{l=1}^{2n+1}\left( Y_n^l(s)\right)^2dS\\= &
\frac1{4\pi}\sum_{n=1}^\infty\frac{2n+1}{(n(n+1)+tr)^{k}}
\int_{\mathbb{S}^2}g(s)^kdS,
\endaligned
$$
which proves (\ref{N-r}) for $M=\mathbb{S}^2$ in view of
 Proposition~\ref{P:S2}.

For the torus $\mathrm{T}^2$ we use the orthonormal basis
$(2\pi)^{-1}e^{imx}$, $m\in\mathbb{Z}^2_0=\mathbb{Z}^2\setminus 0$ and obtain
$$
N_{-r}(V)\le
\frac1{4\pi^2}
\sum_{m\in\mathbb{Z}^2_0}\frac1{(|m|^2+tr)^{k}}
\int_{\mathbb{T}^2}g(x)^{k}dx,
$$
which proves (\ref{N-r}) for $M=\mathbb{T}^2$ in view of
 Proposition~\ref{P:T2}.

Next, restricting $k$ to $k\in(1,2)$ and using~(\ref{Nr})
with $\gamma=1$ we have
$$
\sum_{\nu_j\le0}|\nu_j|\le
\frac1{4\pi}\frac1{k-1}\int_{M}\int_0^\infty(tr)^{1-k}(V(x)+(1-t)r)_-^kdrdx.
$$
We evaluate the inner integral  setting
$r=\frac1{1-t}V_-(x)\,\rho. $
 If $V\le0$ and $V_-=-V$, then $(V(x)+(1-t)r)_-=V_-(x)(\rho-1)_-$ and
$$
\int_0^\infty (tr)^{1-k}(V(x)+(1-t)r)_-^kdr=
t^{1-k}(1-t)^{k-2}B(2-k,1+k)V_-(x)^2.
$$
For the optimal $t=k-1\in(0,1)$ we obtain
\begin{equation}\label{LTg1}
\sum_{\nu_j\le0}|\nu_j|\le
\frac1{4\pi}\frac1{k-1}\frac{B(2-k,1+k)}{(k-1)^{k-1}(2-k)^{2-k}}
\int_{M}V_-(x)^2dM,\quad k\in(1,2),
\end{equation}
which proves~(\ref{LTtrace}) with
$$
\mathrm{L}_1(M)\le\frac1{4\pi}
\frac{B(2-k,1+k)}{(k-1)^{k}(2-k)^{2-k}}\,\biggl|_{k=3/2}=\frac38.
$$
The minimum is  attained at $k=1.38\dots$, giving
 $\mathrm{L}_1(M)\le0.3605\,.$
\end{proof}

We now consider the vector case important for
applications. The case $M=\mathbb{T}^2$ involves no
difficulties since the Laplacian acts independently on
the components of a vector field, so we consider
$M=\mathbb{S}^2$. The Laplace operator acting on (tangent) vector
fields on $\mathbb{S}^2$ we define as the Laplace--de Rham
operator $-d\delta-\delta d$ identifying $1$-forms and
vectors. Then for a two-dimensional manifold we have
\cite{I93}
$$
\mathbf{\Delta} u=\nabla\div u-\rot\rot u,
$$
where the operators $\nabla=\grad$ and $\div$ have the
conventional meaning. The operator $\rot$ of a vector $u$ is a
scalar  and for a scalar $\psi$,
$\rot\psi$ is a vector:
$$
\rot u:=-\div(n\times u),\qquad
\rot\psi:=-n\times\nabla\psi,
$$
where $n$ is the unit outward normal vector. We note that for the
operators $\rot$ so defined,  for a scalar $\psi$ it holds
\begin{equation}\label{rotrot}
\rot\rot\psi=-\Delta\psi\ (=-\div\grad\psi).
\end{equation}
 Integrating by parts, that is, using
$$
(\nabla\psi,u)_{L_2(T\mathbb{S}^2)}=-(\psi,\div
u)_{L_2(\mathbb{S}^2)},\quad
(\rot\psi,u)_{L_2(T\mathbb{S}^2)}=(\psi,\rot
u)_{L_2(\mathbb{S}^2)},
$$
we obtain
$$
(-\mathbf{\Delta} u,u)_{L_2(T\mathbb{S}^2)}=\|\rot u\|^2+\|\div u\|^2.
$$
Next, we have the orthogonal sum
$L_2(T\mathbb{S}^2)=H\oplus H^\perp$:
$$
H=\{u\in L_2(T\mathbb{S}^2),\ \div u=0\},\ H^\perp=\{u\in
L_2(T\mathbb{S}^2),\ \rot u=0\}.
$$
Both $H$ and $H^\perp$ are invariant with
respect to $\mathbf{\Delta}$ (in then sense that if $u\in
H$ and $\mathbf{\Delta} u\in  L_2(T\mathbb{S}^2)$, then
$\mathbf{\Delta} u\in H$, and similarly for $H^\perp$) and
there exist two orthonormal systems of
eigenvectors: $\{w_j\}_{j=1}^\infty \in H$ and
$\{v_j\}_{j=1}^\infty \in H^\perp$ with the same
eigenvalues
\begin{equation}\label{bas-vec}
-\mathbf{\Delta} w_j=\lambda_j w_j,\qquad -\mathbf{\Delta}
v_j=\lambda_j v_j,
\end{equation}
where
$$
 w_j=\lambda_j^{-1/2}n\times\nabla\varphi_j,\qquad
v_j=\lambda_j^{-1/2}\nabla\varphi_j.
$$
 Here the
$\lambda_j$'s and the $\varphi_j$'s are the eigenvalues and
eigenfunctions of the scalar Laplacian on $\mathbb{S}^2$,
see~(\ref{single}). Both~(\ref{bas-vec}), and the orthonormality of
the $w_j$'s and $v_j$'s follow from~(\ref{rotrot}). Hence, corresponding
to the eigenvalue
$\Lambda_n=n(n+1)$ there are two families of $2n+1$
orthonormal eigenvectors $w_n^l(s)$ and $v_n^l(s)$,
$l=1,\dots,2n+1$ and~(\ref{identity-vec}) gives the
following important identities: for any $s\in\mathbb{S}^2$
\begin{equation}\label{id-vec}
\sum_{l=1}^{2n+1}|w_n^l(s)|^2=\frac{2n+1}{4\pi},\qquad
\sum_{l=1}^{2n+1}|v_n^l(s)|^2=\frac{2n+1}{4\pi}.
\end{equation}
We finally observe that $-\mathbf{\Delta}\ge \Lambda_1I=2I$.

Having done these preliminaries we consider the quadratic
form
\begin{equation}\label{form-vec}
    Q^\mathrm{vec}_V(u)=\|\rot u \|^2+
    \|\div u \|^2+\int_{\mathbb{S}^2} V(s) |u(s)|^2dS,\quad u\in
  H^1(T\mathbb{S}^2),
\end{equation}
which is bounded from below, and defines the self-adjoint
  Schr\"odinger  operator
$$
    -\mathbf{\Delta} u+ V u
$$
with discrete spectrum. We estimate its negative trace
\begin{equation}\label{LTtrace-vec}
   \sum_{\nu_j\le0}|\nu_j|\le
   \mathrm{L}^\mathrm{vec}_1(\mathbb{S}^2)\int_{\mathbb{S}^2}V_-(s)^2dS.
\end{equation}

\begin{theorem}\label{T:L1-vec}
\begin{equation}\label{const_L1T2-vec}
\mathrm{L}^\mathrm{vec}_1(\mathbb{S}^2)\le\frac3{4}.
\end{equation}
\end{theorem}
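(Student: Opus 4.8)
The plan is to run the Birman--Schwinger argument of Theorem~\ref{T:L1-scal} essentially verbatim, the only change being that the vector Laplacian on $\mathbb{S}^2$ carries \emph{two} orthonormal families of eigenfields, $\{w_n^l\}$ and $\{v_n^l\}$, sharing the same eigenvalues $\Lambda_n=n(n+1)$. Since $-\mathbf{\Delta}\ge 2I>0$ has trivial kernel, no projection $\Pi$ is needed, and for the scalar cut-off $g(s)=(V(s)+(1-t)r)_-$ the Lieb--Thirring convexity inequality again gives, for $k>1$,
\begin{equation*}
N_{-r}(V)\le\Tr\bigl[g^{k}(-\mathbf{\Delta}+tr)^{-k}\bigr],
\end{equation*}
the trace now being computed in $L_2(T\mathbb{S}^2)$.

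First I would expand this trace in the combined eigenbasis $\{w_n^l,v_n^l\}$. On each eigenfield the resolvent acts as scalar multiplication by $(\Lambda_n+tr)^{-k}$, and multiplication by the scalar $g^k$ yields $\int_{\mathbb{S}^2}g^k|w_n^l|^2dS$, and similarly for $v_n^l$. Summing the two families and invoking identity~(\ref{id-vec}) replaces the scalar factor $\tfrac{2n+1}{4\pi}$ by $\tfrac{2(2n+1)}{4\pi}$, so that
\begin{equation*}
\Tr\bigl[g^{k}(-\mathbf{\Delta}+tr)^{-k}\bigr]
=\frac{2}{4\pi}\sum_{n=1}^\infty\frac{2n+1}{(n(n+1)+tr)^{k}}
\int_{\mathbb{S}^2}g(s)^k\,dS,
\end{equation*}
which is exactly twice the quantity obtained in the scalar proof.

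Next, applying Proposition~\ref{P:S2} to the series produces the analogue of~(\ref{N-r}) with an extra factor $2$, namely
\begin{equation*}
N_{-r}(V)\le\frac{2}{4\pi}\,\frac1{k-1}(tr)^{1-k}
\int_{\mathbb{S}^2}(V+(1-t)r)_-^k\,dS .
\end{equation*}
From here the remaining steps---integrating against $r^{\gamma-1}dr$ with $\gamma=1$, evaluating the inner integral via the Beta function, and optimizing in $t$ (at $t=k-1$) and in $k$ (at $k=3/2$)---are identical to those following~(\ref{N-r}), and the single factor $2$ propagates through to double the scalar constant. This yields $\mathrm{L}^\mathrm{vec}_1(\mathbb{S}^2)\le 2\cdot\tfrac38=\tfrac34$, as claimed.

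The conceptual heart of the argument, and the only point requiring vector-specific input, is the pointwise sum rule~(\ref{id-vec}): it is precisely the fact that \emph{both} the divergence-free fields $w_n^l$ and the potential fields $v_n^l$ obey the same rotation-invariant identity that lets the angular integration decouple on a manifold where the Laplacian does \emph{not} split into scalar components. Everything else is transparent factor-of-two bookkeeping on top of the scalar computation, so I anticipate no real obstacle beyond correctly accounting for this doubling.
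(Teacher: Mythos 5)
Your proposal matches the paper's proof: both expand the Birman--Schwinger trace in the combined eigenbasis $\{w_n^l,v_n^l\}$ of the vector Laplacian, invoke the pointwise identity~(\ref{id-vec}) to extract the factor $2\cdot\frac{2n+1}{4\pi}$, apply Proposition~\ref{P:S2}, and let the factor of $2$ propagate through the scalar argument to yield $\mathrm{L}_1^{\mathrm{vec}}(\mathbb{S}^2)\le 2\cdot\frac38=\frac34$. The bookkeeping and the key observation (no projection needed since $-\mathbf{\Delta}\ge 2I$) are exactly as in the paper.
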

\begin{proof}
Using the basis~(\ref{bas-vec}), identity (\ref{id-vec}),
similarly to Theorem~\ref{T:L1-scal}
$$
\aligned  &N_{-r}(V)\le\Tr[
g^{k}(-\mathbf{\Delta}+tr)]^{-k}
\\=
 &\sum_{j=1}^\infty(g^k(-\mathbf{\Delta}+tr)^{-k}w_j,w_j)+
\sum_{j=1}^\infty(g^k(-\mathbf{\Delta}+tr)^{-k}v_j,v_j)
\\=
&2\frac1{4\pi}\sum_{n=1}^\infty\frac{2n+1}{(n(n+1)+tr)^{k}}
\int_{\mathbb{S}^2}g(s)^kdS\le
\frac{1}{2\pi}\frac1{k-1}(tr)^{1-k}
\int_{\mathbb{S}^2}g(s)^kdS,
\endaligned
$$
and we complete the proof as in Theorem~\ref{T:L1-scal}.
\end{proof}
\begin{remark}
{\rm
The same estimate holds for the torus
\begin{equation}\label{const_L1T2-vecT2}
\mathrm{L}^\mathrm{vec}_1(\mathbb{T}^2)\le\frac3{4}.
\end{equation}
However, in this case we have to assume that
$u$ has zero average.
}
\end{remark}

Spectral inequalities   (\ref{LTtrace}) and~(\ref{LTtrace-vec}) are
 equivalent to the
integral inequalities for families of orthonormal functions
and vector fields. As before, $M$ stands for $\mathbb{S}^2$
or~$\mathbb{T}^2$.

\begin{theorem}
 Let $\{\varphi_j\}_{j=1}^N\in\dot{H}^1(M)$
be an orthonormal scalar family. Then for
$\rho(x):=\sum_{j=1}^N\varphi_j(x)^2$ the following
inequality~holds:
\begin{equation}\label{orthscal}
\int_{M}\rho(x)^2dM\le \mathrm{k}_2\sum_{j=1}^N\|\nabla \varphi_j\|^2,
\quad \mathrm{k}_2\le\frac32\,.
\end{equation}

If a family of vector fields $\{u_j\}_{j=1}^N\in H^1(TM)$
is orthonormal in $L^2(TM)$,  then
\begin{equation}\label{orthvec}
\int_{M}\rho(x)^2dM\le
 \mathrm{k}_2^{\mathrm{vec}}\sum_{j=1}^N(\|\rot u_j\|^2
+\|\div u_j\|^2),
\quad \mathrm{k}_2^{\mathrm{vec}}\le 3,
\end{equation}
where $\rho(x)=\sum_{j=1}^N|u_j(x)|^2$.
If, in addition, $\div u_j=0$ $($or $\rot u_j=0$$)$
for $j=1,\dots,N$,  then
\begin{equation}\label{orthvecsol}
\int_{M}\rho(x)^2dM\le
\begin{cases}\displaystyle
\mathrm{k}_2^{\textrm{\rm sol}}\sum_{j=1}^N\|\rot u_j\|^2,
\quad \ \div u_j=0,
\\\displaystyle
\mathrm{k}_2^{\textrm{\rm pot}}\sum_{j=1}^N\|\div u_j\|^2,
\quad \rot u_j=0,
\end{cases}
\end{equation}
 where
\begin{equation}\label{vec-const}
\mathrm{k}_2^{\textrm{\rm sol}}=
\mathrm{k}_2^{\textrm{\rm pot}}\le
\frac{\mathrm{k}_2^{\mathrm{vec}}}2\le\frac32\,.
\end{equation}
\end{theorem}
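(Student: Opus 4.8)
The plan is to derive all four integral inequalities from the negative-trace bounds of Theorems~\ref{T:L1-scal} and~\ref{T:L1-vec} by the standard duality that underlies~(\ref{k=L}). First I would record the elementary variational fact: for any orthonormal scalar family $\{\varphi_j\}_{j=1}^N\subset\dot H^1(M)$ and any admissible $V$, writing $H_V$ for the operator~(\ref{Schr}) and $P$ for the orthogonal projection onto $\mathrm{span}\{\varphi_j\}$,
$$
\sum_{j=1}^N Q_V(\varphi_j)=\Tr\bigl(PH_VP\bigr)\ge\sum_{j=1}^N\mu_j\ge-\sum_{\nu_j\le0}|\nu_j|,
$$
where the $\mu_j$ are the ordered eigenvalues of $H_V$ and the middle inequality is the Ky Fan principle. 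Combined with~(\ref{LTtrace}) this gives $\sum_j\|\nabla\varphi_j\|^2+\int_M V\rho\,dM\ge-\mathrm{L}_1(M)\int_M V_-^2dM$.

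Next I would insert the test potential $V=-c\rho$, $c>0$. Since $\dot H^1(M)\hookrightarrow L_4(M)$ in two dimensions, $\rho\in L_2(M)$ and $V$ is admissible; moreover $V_-=c\rho$, so the last inequality reads $\sum_j\|\nabla\varphi_j\|^2\ge\bigl(c-\mathrm{L}_1(M)c^2\bigr)\int_M\rho^2dM$. Maximizing the quadratic in $c$ (optimum $c=1/(2\mathrm{L}_1(M))$, value $1/(4\mathrm{L}_1(M))$) yields~(\ref{orthscal}) with $\mathrm{k}_2\le4\mathrm{L}_1(M)\le\frac32$, precisely the $n=2$ instance of~(\ref{k=L}). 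The vector inequality~(\ref{orthvec}) is obtained verbatim, replacing $-\Delta$ by $-\mathbf{\Delta}$, the energy by $\|\rot u_j\|^2+\|\div u_j\|^2$, and $\mathrm{L}_1$ by $\mathrm{L}^{\mathrm{vec}}_1(\mathbb{S}^2)\le\frac34$, so that $\mathrm{k}^{\mathrm{vec}}_2\le4\cdot\frac34=3$.

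For~(\ref{orthvecsol}) the key point is that on the invariant subspace $H$ (resp.\ $H^\perp$) the compressed Schr\"odinger operator satisfies a \emph{better} trace bound, equal to the scalar one. Repeating the Birman--Schwinger computation of Theorem~\ref{T:L1-vec} but summing over only the basis $\{w_j\}\subset H$ (resp.\ $\{v_j\}\subset H^\perp$), the factor $2$ that arose from two families disappears, and identity~(\ref{id-vec}) reproduces exactly the scalar kernel $\frac1{4\pi}\sum_n\frac{2n+1}{(n(n+1)+tr)^k}$. Hence the solenoidal and potential trace constants are each $\le\frac38$, i.e.\ half of $\mathrm{L}^{\mathrm{vec}}_1(\mathbb{S}^2)$; the two cases coincide because $u\mapsto n\times u$ is an isometry exchanging $H$ and $H^\perp$ (equivalently, the two identities in~(\ref{id-vec}) are identical). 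Running the duality argument above with orthonormal families restricted to $H$ (resp.\ $H^\perp$) then gives~(\ref{orthvecsol}) with $\mathrm{k}_2^{\mathrm{sol}}=\mathrm{k}_2^{\mathrm{pot}}\le4\cdot\frac38=\frac32=\mathrm{k}^{\mathrm{vec}}_2/2$, which is~(\ref{vec-const}).

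The hard part is making the restricted duality rigorous: since the multiplication operator $V$ does not preserve $H$, one must verify that the negative trace of the compression $P_H(-\mathbf{\Delta}+V)P_H$ is still controlled by $\tfrac38\int V_-^2$. This is exactly where the invariance of $H$ under $-\mathbf{\Delta}$ is used — so that $\bigl(P_H(-\mathbf{\Delta}+tr)P_H\bigr)^{-k}$ acts on $w_j$ by $(\lambda_j+tr)^{-k}$ — together with the pointwise identity~(\ref{id-vec}) that collapses the compressed trace to the scalar kernel. Once this reduction is in place the Legendre optimization in $c$ is routine, and the torus statement~(\ref{const_L1T2-vecT2}) feeds in identically under the zero-average assumption.
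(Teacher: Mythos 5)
Your proposal is correct, and for (\ref{orthscal}) and (\ref{orthvec}) it coincides in substance with the paper, which simply invokes the equivalence $\mathrm{k}_2=4\mathrm{L}_1$ of (\ref{k=L}); the Ky Fan step, the test potential $V=-c\rho$ and the optimization in $c$ are exactly the standard argument behind that relation. For (\ref{orthvecsol})--(\ref{vec-const}) you take a genuinely different route. The paper stays entirely at the level of the integral inequality: given orthonormal divergence-free $u_1,\dots,u_N$ it adjoins the rotated fields $\widehat u_j=n\times u_j$, notes that the $2N$ fields are jointly orthonormal because $\div u_j=0$ and $\rot\widehat u_j=0$ force $(u_i,\widehat u_j)=0$, and applies (\ref{orthvec}) to the doubled family; since $|\widehat u_j|=|u_j|$ pointwise and $\|\div\widehat u_j\|=\|\rot u_j\|$, the left-hand side gains a factor $4$ and the right-hand side a factor $2$, giving $\mathrm{k}_2^{\textrm{\rm sol}}\le\mathrm{k}_2^{\mathrm{vec}}/2$ with no further spectral analysis. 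You instead go back to the spectral level and rerun the Birman--Schwinger computation for the compression of $-\mathbf{\Delta}+V$ to the invariant subspace $H$, where the single family $\{w_j\}$ together with (\ref{id-vec}) reproduces the scalar kernel and hence the constant $\frac38$, and then dualize. This works --- the subspace version of the Birman--Schwinger principle is the same tool the paper already uses for the scalar operator with the projection $\Pi$, and you correctly flag that $V$ does not preserve $H$, so it is the compressed operator rather than a restriction that is being estimated --- but the paper's doubling trick is more elementary and has the structural advantage that any future improvement of $\mathrm{k}_2^{\mathrm{vec}}$ propagates automatically to $\mathrm{k}_2^{\textrm{\rm sol}}$; your version shows more directly that the solenoidal constant obeys the same bound as the scalar one. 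Both arguments obtain $\mathrm{k}_2^{\textrm{\rm sol}}=\mathrm{k}_2^{\textrm{\rm pot}}$ from the isometry $u\mapsto n\times u$, so the two proofs agree there.
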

\begin{proof}
In two dimensions the  relation~(\ref{k=L}) between the constants
$\mathrm{k}_2$ and $\mathrm{L}_1$
is as follows (the fact that we are dealing with manifolds
does not play a role)
\begin{equation}\label{k2=L2}
\mathrm{k}_2=4\mathrm{L}_1.
\end{equation}
This proves~(\ref{orthscal}) and~(\ref{orthvec}).
For the sake of completeness we recall the proof of
(\ref{orthvecsol}), (\ref{vec-const}) from~\cite{I-M-T}. By symmetry
inherent in the two-dimensional case
$$
\div u=0\Leftrightarrow \rot\widehat u=0,
\quad\text{where}\quad \widehat u=n\times u.
$$
Furthermore,  $u_1,\dots,u_N$ are orthonormal  if and only
if
$\widehat u_1,\dots,\widehat u_N$ are orthonormal. This shows that
$\mathrm{k}_2^{\textrm{\rm sol}}=
\mathrm{k}_2^{\textrm{\rm pot}}$.
Let us prove the inequality
$\mathrm{k}_2^{\textrm{\rm
sol}}\le\frac{\mathrm{k}_2^{\mathrm{vec}}}2$.
 Let $u_1,\dots,u_N$
be orthonormal  and let $\div u_j=0$,
$j=1,\dots,N$. We set $\rho(x)=\sum_{j=1}^N|u_j(x)|^2$ and
consider the family of $2N$~vector functions
$u_1,\dots,u_N,\widehat u_1,\dots,\widehat u_N$. Since
$\div u_j=0$ and
$\rot\widehat u_j=0$, $j=1,\dots N$, we have $(u_i,\widehat u_j)=0$
for $1\le i,j\le N$, and the whole family is orthonormal.
Applying~(\ref{orthvec}) to this family of
$2N$~functions and taking into account that
$|u_j(x)|=|\widehat u_j(x)|$ and $\div\widehat u_j(x)=-\rot u_j(x)$ we obtain
$$
\aligned
4\int_M\rho(x)^2\,dx
&=\int_M\biggl(\,\sum_{j=1}^N\bigl(|u_j(x)|^2+
|\widehat u_j(x)|^2\bigr)\biggr)^2\,dx\le
\\
&\le\mathrm{k}_2^{\mathrm{vec}}
\sum_{j=1}^N\bigl(\|{\rot u_j}\|^2+\|{\div\widehat u_j}\|^2\bigr)
=2\mathrm{k}_2^{\mathrm{vec}}\sum_{j=1}^N\|{\rot u_j}\|^2.
\endaligned
$$
Therefore
$\mathrm{k}_2^{\textrm{\rm
sol}}\le{\mathrm{k}_2^{\mathrm{vec}}}/2\le3/2$.
\end{proof}
\begin{remark}\label{R:lower_bound}
{\rm
The lower bound for $\mathrm{k}_2(M)$ is the same as
in $\mathbb{R}^2$
\begin{equation}\label{lower_bound_M}
\mathrm{k}_2(M)\ge\frac1{2\pi}.
\end{equation}
For instance, for the sphere we take the first $N$
eigenfunctions~(\ref{single}) and use the fact that
$\lambda_j=[j^{1/2}]([j^{1/2}]+1)\sim j$. Then
$$
N^2=\biggl(\int_{\mathbb{S}^2}\rho(s)dS\biggr)^2\le
4\pi\|\rho\|^2\le 4\pi\mathrm{k}_2\sum_{j=1}^N\lambda_j\sim
2\pi \mathrm{k}_2 N^2.
$$
Accordingly, in view of (\ref{k2=L2}),
$$
\mathrm{L}_1(M)\ge\frac1{8\pi}.
$$
The same lower bound holds for $\mathbb{T}^2$ since in this
case $\lambda_j\sim j/\pi$.
}
\end{remark}

Concluding this section we briefly consider the
three-dimensional case.  For $\mathbb{S}^3$ we see
from~(\ref{eig_mult}) that the eigenvalue $\Lambda_n=n(n+2)$
has multiplicity $(n+1)^2$ and arguing as in Theorem~\ref{T:L1-scal}
and setting $k=2$ we obtain using Proposition~\ref{P:T3S3}
$$
\aligned
N_{-r}(V)\le\frac1{2\pi^2} \sum_{n=1}^\infty
\frac{(n+1)^2}{(n(n+2)+tr)^2}\int_{\mathbb{S}^3}g(s)^2dS\\
\le\frac{\delta_{\mathbb{S}^3}}{8\pi}(tr)^{-1/2}\int_{\mathbb{S}^3}g(s)^2dS.
\endaligned
$$
For the torus $\mathrm{T}^3$ using the basis of
exponentials
$(2\pi)^{-3/2}e^{imx}$, $m\in\mathbb{Z}^3_0$ we have
$$
\aligned
 N_{-r}(V)\le \frac1{8\pi^3}
\sum_{m\in\mathbb{Z}^3_0}\frac1{(|m|^2+tr)^2}
\int_{\mathbb{T}^2}g(x)^2dx\\<
\frac{\delta_{\mathbb{T}^3}}{8\pi}(tr)^{-1/2}\int_{\mathbb{T}^3}g(x)^2dx.
\endaligned
$$
We set  $t=1/2$ and for a fixed $x\in M$ calculate the integral
$$
\int_0^\infty
(tr)^{-1/2}(V(x)+(1-t)r)_-^2dr=\frac{32}{15}V_-(x)^{5/2}
$$
and obtain using~(\ref{Nr}) the following result.

\begin{theorem}
The negative spectrum of the operator
$-\Delta +\Pi(V\cdot)$ on $M=\mathbb{S}^3$ or $\mathbb{T}^3$
satisfies
$$
   \sum_{\nu_j\le0}|\nu_j|\le
   \mathrm{L}_1(M)\int_{M}V_-(x)^{5/2}dM,
$$
where
$$
\mathrm{L}_1(M)\le\delta_M\frac{4}{15\pi}\,.
$$
Here $\delta_{\mathbb{S}^3}=1.0139\dots$ and
 $\delta_{\mathbb{T}^3}=1$.
\end{theorem}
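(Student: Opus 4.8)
The plan is to follow the scheme of Theorem~\ref{T:L1-scal} verbatim, now in dimension $n=3$, which forces the choice $k=2$ in the Birman--Schwinger bound (we need $k>n/2=3/2$ for $(\Pi(-\Delta+tr)\Pi)^{-k}$ to be trace class). As before I would first assume $V$ smooth and recover the general case by approximation, and use~(\ref{Nr}) with $\gamma=1$ to write the negative trace as $\int_0^\infty N_{-r}(V)\,dr$. Setting $g(x)=(V(x)+(1-t)r)_-$, the Birman--Schwinger inequality together with the Lieb--Thirring convexity inequality gives
$$
N_{-r}(V)\le\Tr\bigl[g^{2}(\Pi(-\Delta+tr)\Pi)^{-2}\bigr].
$$

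Next I would diagonalize the trace against the eigenbasis. For $\mathbb{S}^3$ the eigenvalue $\Lambda_n=n(n+2)$ has multiplicity $(n+1)^2$, and the identity~(\ref{identity}) (with $\sigma(4)=2\pi^2$) collapses the angular sum, leaving
$$
N_{-r}(V)\le\frac1{2\pi^2}\sum_{n=1}^\infty\frac{(n+1)^2}{(n(n+2)+tr)^2}\int_{\mathbb{S}^3}g(s)^2dS.
$$
For $\mathbb{T}^3$ the exponential basis $(2\pi)^{-3/2}e^{imx}$, $m\in\mathbb{Z}^3_0$, gives the analogous expression with the lattice sum $\sum_{m\in\mathbb{Z}^3_0}(|m|^2+tr)^{-2}$ in place of the series. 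At this point I would invoke Proposition~\ref{P:T3S3} to bound each of these sums by $\frac{\delta_M}{8\pi}(tr)^{-1/2}$ times the appropriate normalizing factor, reducing both cases to $N_{-r}(V)\le\frac{\delta_M}{8\pi}(tr)^{-1/2}\int_M g^2\,dM$.

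Finally, I would choose $t=1/2$ and carry out the $r$-integration for fixed $x$. Writing $V_-=-V$ where $V\le0$ and substituting $u=r/2$ turns $\int_0^\infty(tr)^{-1/2}(V+(1-t)r)_-^2\,dr$ into a Beta integral $2V_-^{5/2}B(1/2,3)=\frac{32}{15}V_-^{5/2}$. Inserting this into $\sum_{\nu_j\le0}|\nu_j|=\int_0^\infty N_{-r}(V)\,dr$ and using $\frac1{8\pi}\cdot\frac{32}{15}=\frac4{15\pi}$ yields the claimed constant $\mathrm{L}_1(M)\le\delta_M\frac4{15\pi}$.

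The only genuinely new work, and hence the main obstacle, is the estimate of the two parameter-dependent sums, i.e.\ Proposition~\ref{P:T3S3}: one must compare the discrete spherical series and the three-dimensional lattice sum to their common continuous analogue $\frac1{(2\pi)^3}\int_{\mathbb{R}^3}(|\xi|^2+tr)^{-2}d\xi$, which scales as $(tr)^{-1/2}$. For the torus an integral-comparison (monotonicity) argument should give the sum $\le$ the integral, so that $\delta_{\mathbb{T}^3}=1$; for the sphere the multiplicity $(n+1)^2$ against $\Lambda_n=n(n+2)$ produces a small positive excess over the integral, and the quantitative content is precisely the constant $\delta_{\mathbb{S}^3}=1.0139\dots$. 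Everything else is identical to the two-dimensional argument.
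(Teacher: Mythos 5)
Your proposal follows the paper's own proof essentially verbatim: the same Birman--Schwinger plus convexity reduction with $k=2$, the same diagonalization against the spherical harmonics (multiplicity $(n+1)^2$ for $\Lambda_n=n(n+2)$) and the exponential basis, the same appeal to Proposition~\ref{P:T3S3} for the bound $\frac{\delta_M}{8\pi}(tr)^{-1/2}$, and the same choice $t=1/2$ with the Beta-integral evaluation $\frac{32}{15}V_-^{5/2}$ giving $\frac4{15\pi}$. The only divergence is cosmetic: for the torus sum the paper establishes $\delta_{\mathbb{T}^3}=1$ via Poisson summation (yielding $\pi^2-\mu^{-3}+O(e^{-C\mu})$) plus a numerical check on a finite interval, rather than the monotone integral-comparison you sketch, but since you correctly isolate Proposition~\ref{P:T3S3} as the only genuinely new ingredient, the argument is the same.
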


\setcounter{equation}{0}
\section{One-dimensional two-term Lieb--Thirring inequalities}\label{S:1D}

The  imbedding of the Sobolev space
$H^l(\mathbb{R})$, $l>1/2$, into the space of bounded continuous
functions can be written in the form of a multiplicative
inequality
\begin{equation}\label{my}
\|f\|_\infty^2\le
c(l)\|f\|^{2-1/l}\|f^{(l)}\|^{1/l},
\end{equation}
where the sharp constant $c(l)$ was found in~\cite{Taikov}:
\begin{equation}\label{Tai}
c(l)=(2l\alpha^\alpha(1-\alpha)^{1-\alpha}\sin\pi\alpha)^{-1},
\quad \alpha=1/({2l}).
\end{equation}
It was also shown there that there exists a unique (up to
dilations and translations) extremal function.
For periodic functions with zero average $f\in\dot{H}^l(\mathbb{S}^1)$
inequality~(\ref{my}) holds with the same sharp
constant~(\ref{Tai}), however, there
are no extremal functions~\cite{I98JLMS}. An important improvement
of~(\ref{my}) for $2\pi$-periodic functions
 has been recently obtained in~\cite{Zelik},
where it was shown that
\begin{equation}\label{Zel}
\|f\|_\infty^2\le
c(l)\|f\|^{2-1/l}\|f^{(l)}\|^{1/l}-K(l)\|f\|^2.
\end{equation}
For all $l$ the constant  $K(l)>0$ and, in particular,
$K(1)=1/\pi$ and $K(2)=2/(3\pi)$, so that
\begin{equation}\label{Zel2}
\|f\|_\infty^2\le 1\cdot\|f\|\|f'\|-\frac1\pi\|f\|^2,\
\|f\|_\infty^2\le
(4/27)^{1/4}\|f\|^{3/2}\|f''\|^{1/2}-\frac2{3\pi}\|f\|^2,
\end{equation}
where all four constants are sharp and no extremal
functions exist.
\begin{theorem}\label{T:L-T-rem-int}
Suppose that
$\{\varphi_j\}_{j=1}^N\subset\dot{H}^l(\mathbb{S}^1)$ is an
orthonormal family in ${L}_2(\mathbb{S}^1)$. Then for
$\rho(x):=\sum_{j=1}^N\varphi_j(x)^2$ the following
inequality holds:
\begin{equation}\label{int-L-T-rem}
\int_0^{2\pi}\rho(x)^{2l+1}dx+N\cdot K(l)^{2l}\le
c(l)^{2l}\sum_{j=1}^N\|\varphi_j^{(l)}\|^2.
\end{equation}
\end{theorem}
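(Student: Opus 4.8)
The plan is to reduce the inequality to a pointwise application of the refined multiplicative estimate~(\ref{Zel}) and then integrate, following the duality idea of Foias alluded to in the introduction. The key observation is that the density $\rho(x)=\sum_{j=1}^N\varphi_j(x)^2$ should be tested against a competitor built from~(\ref{Zel}). First I would fix a point $x_0\in\mathbb{S}^1$ and consider the single function $\psi(y)=\sum_{j=1}^N a_j\varphi_j(y)$ where the coefficients $a_j=\varphi_j(x_0)/\rho(x_0)^{1/2}$ are chosen so that $\psi(x_0)=\rho(x_0)^{1/2}$ and $\|\psi\|^2=\sum_j a_j^2=1$ by orthonormality. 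By the Cauchy--Schwarz inequality this $\psi$ realizes the extremal linear combination at $x_0$, so that $\rho(x_0)=\psi(x_0)^2\le\|\psi\|_\infty^2$.

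Applying~(\ref{Zel}) to $\psi$ at $l$ gives
\begin{equation}\label{plan-point}
\rho(x_0)\le c(l)\,\|\psi^{(l)}\|^{1/l}-K(l),
\end{equation}
since $\|\psi\|=1$. Here $\|\psi^{(l)}\|^2=\sum_{i,j}a_ia_j(\varphi_i^{(l)},\varphi_j^{(l)})$ is a quadratic form in the $a_j$, and the natural step is to bound it by the ``diagonal'' energy. The cleanest route is to note that the orthonormal family can be replaced, without loss, by one that also diagonalizes the Gram matrix $G_{ij}=(\varphi_i^{(l)},\varphi_j^{(l)})$; a simultaneous orthogonal change of basis preserves both orthonormality in $L_2$ and the density $\rho$. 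After this reduction $\|\psi^{(l)}\|^2=\sum_j a_j^2\mu_j$ where $\mu_j=\|\varphi_j^{(l)}\|^2$, so $\|\psi^{(l)}\|^2\le\rho(x_0)^{-1}\sum_j\varphi_j(x_0)^2\mu_j$. Substituting into~(\ref{plan-point}) and raising to the power $2l$ after rearranging, I expect to reach a pointwise bound of the shape
\begin{equation}\label{plan-point2}
\bigl(\rho(x_0)+K(l)\bigr)^{2l}\le c(l)^{2l}\,\rho(x_0)^{-1}\sum_{j=1}^N\varphi_j(x_0)^2\,\mu_j.
\end{equation}

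The final step is to integrate~(\ref{plan-point2}) in $x_0$ over $\mathbb{S}^1$. On the right one uses $\int_0^{2\pi}\varphi_j(x)^2dx=1$ to collapse the sum to $\sum_j\mu_j=\sum_j\|\varphi_j^{(l)}\|^2$, which is the desired right-hand side; but this requires first disposing of the factor $\rho(x_0)^{-1}$, and this is where the main obstacle lies. The convexity/superadditivity argument has to convert the pointwise estimate into the clean additive form $\int\rho^{2l+1}+N K(l)^{2l}$ on the left. I would handle the left side by the elementary inequality $(\rho+K)^{2l}\ge \rho^{2l}+2l\,K\rho^{2l-1}$ together with the factor $\rho^{-1}$ from the right-hand side; more robustly, one can multiply~(\ref{plan-point}) through by $\rho(x_0)$ before taking powers so that the troublesome $\rho^{-1}$ cancels the extra $\rho$ exactly and the binomial lower bound $(\rho+K)^{2l}\ge\rho^{2l}+(\text{const})K$ produces the two terms $\int\rho^{2l+1}$ and $N K(l)^{2l}$ after integration (using $\int\rho\,dx=N$). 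Balancing the constants so that the subtracted $K(l)\|\psi\|^2$ term integrates to exactly $N K(l)^{2l}$ is the delicate bookkeeping I expect to be the crux; everything else is the standard Rumin-type pointwise-to-integral passage.
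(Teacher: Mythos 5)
Your proposal follows essentially the same route as the paper's proof: apply the refined inequality~(\ref{Zel}) to the linear combination with coefficients $\varphi_j(x_0)$ (your $\psi$ is exactly the paper's choice $\xi_j=\varphi_j(x_0)$, merely normalized), multiply through by $\rho(x_0)$, raise to the power $2l$, and integrate using orthonormality. The only loose end you flag as ``delicate bookkeeping'' is in fact elementary: the needed bound is $\rho(\rho+K(l))^{2l}\ge \rho^{2l+1}+K(l)^{2l}\rho$, i.e.\ the superadditivity $(a+b)^{2l}\ge a^{2l}+b^{2l}$ for $a,b\ge0$ and $2l\ge1$, after which $\int_0^{2\pi}\rho\,dx=N$ produces exactly the term $N\cdot K(l)^{2l}$ (your variant $(\rho+K)^{2l}\ge\rho^{2l}+2lK\rho^{2l-1}$ gives the wrong second term); also the diagonalization of the Gram matrix is unnecessary, since the off-diagonal terms $\varphi_i(x)\varphi_j(x)(\varphi_i^{(l)},\varphi_j^{(l)})$ vanish upon integration by orthonormality.
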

\begin{proof} For any
$\xi\in\mathbb{R}^N$ using~(\ref{Zel}) with
$f(x)=\sum_{j=1}^N\xi_j\varphi_j(x)$ we have
$$
\biggl|\!\sum_{j=1}^N\xi_j\varphi_j(x)\!\biggr|^2\!\le
c(l)\biggl(\sum_{j=1}^N\xi_j^2\!\biggr)^\frac{2l-1}{2l}\!
\biggl(\sum_{i,j=1}^N\xi_i\xi_j
\bigl(\varphi_i^{(l)}\!,\varphi_j^{(l)}\bigr)\biggr)^\frac1{2l}\!-\!
K(l)\biggl(\sum_{j=1}^N\xi_j^2\biggr),
$$
by orthonormality.
Setting $\xi_j=\varphi_j(x)$
we obtain
$$
\rho(x)^2\le
c(l)\rho(x)^\frac{2l-1}{2l}
\biggl(\sum_{i,j=1}^N\varphi_i(x)\varphi_j(x)
\bigl(\varphi_i^{(l)}\!,\varphi_j^{(l)}\bigr)\biggr)^\frac1{2l}-
K(l)\rho(x),
$$
or
\begin{multline*}
\rho(x)^{2l+1}+K(l)^{2l}\rho(x)\le\rho(x)\bigl(\rho(x)+K(l)\bigr)^{2l}\\
\le c(l)^{2l}
\sum_{i,j=1}^N\varphi_i(x)\varphi_j(x)
\bigl(\varphi_i^{(l)},\varphi_j^{(l)}\bigr).
\end{multline*}
Integrating and again using orthonormality  we finally
obtain~(\ref{int-L-T-rem}).
\end{proof}

For $V(x)\ge0$ we consider the following
quadratic form on $\dot{H}^l(\mathbb{S}^1)$
\begin{equation}\label{form}
\int_0^{2\pi}\varphi^{(l)}(x)^2dx-\int_0^{2\pi}V(x)\varphi(x)^2dx,
\end{equation}
which is bounded from below and defines a
Schr\"odinger-type operator
\begin{equation}\label{oper}
-\frac{d^{2l}\varphi}{dx^{2l}}-\Pi(V\varphi).
\end{equation}
In view of compactness of $\mathbb{S}^1$ the spectrum of
this operator is discrete.
\begin{theorem}\label{T:L-T-rem-spec}
Suppose that there exist $N$ negative eigenvalues $-\nu_j\le0$, 
 $j=1,\dots,N$ of the operator~(\ref{oper}). Then
 both the negative trace and the number $N$ of negative
 eigenvalues satisfy the following inequality
\begin{equation}\label{L-T-rem-spec}
\sum_{j=1}^N\nu_j+N\cdot\biggl(\frac{K(l)}{c(l)}\biggr)^{2l}\le
\frac{2l}{(2l+1)^{\frac{2l+1}{2l}}}\cdot
c(l)\int_0^{2\pi}V(x)^\frac{2l+1}{2l}dx.
\end{equation}
\end{theorem}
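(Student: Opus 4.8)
The plan is to combine the integral inequality of Theorem~\ref{T:L-T-rem-int} with the variational characterization of the eigenvalues, and then to carry out a pointwise (Legendre-type) optimization in the density $\rho$. Let $\varphi_1,\dots,\varphi_N\in\dot{H}^l(\mathbb{S}^1)$ be the eigenfunctions corresponding to the negative eigenvalues $-\nu_j$ of the operator~(\ref{oper}), normalized to be orthonormal in $L_2(\mathbb{S}^1)$. Since each $\varphi_j$ has mean value zero we have $\Pi\varphi_j=\varphi_j$, so testing the eigenvalue equation against $\varphi_j$ gives $\|\varphi_j^{(l)}\|^2-\int_0^{2\pi}V\varphi_j^2\,dx=-\nu_j$. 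Summing over $j$ and writing $\rho(x)=\sum_{j=1}^N\varphi_j(x)^2$ yields
$$
\sum_{j=1}^N\nu_j=\int_0^{2\pi}V(x)\rho(x)\,dx-\sum_{j=1}^N\|\varphi_j^{(l)}\|^2.
$$

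The next step is to insert the lower bound for $\sum_{j=1}^N\|\varphi_j^{(l)}\|^2$ supplied by Theorem~\ref{T:L-T-rem-int}, namely $\sum_{j=1}^N\|\varphi_j^{(l)}\|^2\ge c(l)^{-2l}\bigl(\int_0^{2\pi}\rho^{2l+1}dx+N\,K(l)^{2l}\bigr)$. This is exactly where the extra term comes from: it moves $N\,(K(l)/c(l))^{2l}$ to the left-hand side and leaves
$$
\sum_{j=1}^N\nu_j+N\Bigl(\frac{K(l)}{c(l)}\Bigr)^{2l}\le\int_0^{2\pi}\Bigl(V(x)\rho(x)-c(l)^{-2l}\rho(x)^{2l+1}\Bigr)dx.
$$
It is important that Theorem~\ref{T:L-T-rem-int} applies verbatim, since the $\varphi_j$ form an orthonormal family in $\dot{H}^l(\mathbb{S}^1)$ and no further hypotheses are needed.

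The remaining step is to estimate the integrand pointwise by its maximum over $\rho\ge0$ for each fixed $x$ (recall that $V\ge0$). Maximizing $V\rho-c(l)^{-2l}\rho^{2l+1}$ produces the critical value $\rho_*=\bigl(c(l)^{2l}V/(2l+1)\bigr)^{1/(2l)}$, at which the expression equals $\frac{2l}{(2l+1)^{(2l+1)/(2l)}}\,c(l)\,V^{(2l+1)/(2l)}$; integrating over $[0,2\pi]$ gives precisely the right-hand side of~(\ref{L-T-rem-spec}).

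I do not anticipate a genuine obstacle here: the sole analytic input is the sharp two-term bound of Theorem~\ref{T:L-T-rem-int}, and the passage from an orthonormal-family inequality to a spectral inequality is the standard duality (Legendre-transform) argument. The only point requiring a little care is the bookkeeping of exponents in the final optimization — verifying that $c(l)^{-2l}$ raised to the power $1/(2l)$ reconstitutes the factor $c(l)$ and that the exponent of $(2l+1)$ assembles as $(2l+1)/(2l)$ — which is routine calculus rather than a real difficulty.
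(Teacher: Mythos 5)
Your argument is correct and follows the paper's proof essentially verbatim: the same eigenvalue identity $\sum_j\nu_j=\int V\rho-\sum_j\|\varphi_j^{(l)}\|^2$ combined with the two-term bound of Theorem~\ref{T:L-T-rem-int}. The only (immaterial) difference is that you maximize $V(x)\rho-c(l)^{-2l}\rho^{2l+1}$ pointwise in $x$, whereas the paper first applies H\"older's inequality and then maximizes over the single scalar $y=\|\rho\|_{L_{2l+1}}$; both routes produce the identical constant $\frac{2l}{(2l+1)^{(2l+1)/(2l)}}c(l)$.
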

\begin{proof}
Let the orthonormal eigenfunctions $\varphi_j(x)$
correspond to the eigenvalues $-\nu_j$. Then
$$
\int_0^{2\pi}\varphi_j^{(l)}(x)^2dx-\int_0^{2\pi}V(x)\varphi_j(x)^2dx=
-\nu_j.
$$
Setting as before $\rho(x):=\sum_{j=1}^N\varphi_j(x)^2$
and  using~(\ref{int-L-T-rem}) we obtain
$$
\aligned \sum_{j=1}^N\nu_j&=\int_0^{2\pi}V(x)\rho(x)dx-
\sum_{j=1}^N\|\varphi_j^{(l)}\|^2\\
&\le \|V\|_{L_{\frac{2l+1}{2l}}}\|\rho\|_{L_{2l+1}}-
\frac1{c(l)^{2l}}\|\rho\|_{L_{2l+1}}^{2l+1}-
N\cdot\biggl(\frac{K(l)}{c(l)}\biggr)^{2l}\\
&\le \max_y\biggl(\|V\|_{L_{\frac{2l+1}{2l}}}y-
\frac1{c(l)^{2l}}y^{2l+1}\biggr)-
N\cdot\biggl(\frac{K(l)}{c(l)}\biggr)^{2l}.
\endaligned
$$
Calculating the maximum we obtain~(\ref{L-T-rem-spec}).
\end{proof}
\begin{remark}
{\rm
It is worth pointing out that unlike $c(l)$, the constants
$K(l)$ are not dimensionless and for $L$-periodic functions
(with mean value zero) we have
$K_L(l)=K_{2\pi}(l)(2\pi/L)$.
For example, for $l=1$ 
\begin{equation}\label{L}
\aligned
\int_0^{L}\rho(x)^{3}dx+N \frac4{L^2}&\le
\sum_{j=1}^N\|\varphi'_j\|^2,\\
\sum_{j=1}^N\nu_j+N\frac4{L^2}&\le
\frac{2}{3\sqrt{3}}
\int_0^{L}V(x)^{3/2}dx.
\endaligned
\end{equation}
}
\end{remark}
\begin{remark}
{\rm
If the potential $V$ is even (and periodic), then  the subspace
of odd periodic functions is invariant for the operator
$$
-\frac{d^{2l}}{dx^{2l}}\varphi-V\varphi,
$$
and the orthogonal projection $\Pi$ (\ref{Int_Sch}) can be
omitted.
}
\end{remark}

\section{Auxiliary inequalities}\label{S:Aux}

\begin{proposition}\label{P:S2}
For $\mu\ge0$ and $k=3/2$
\begin{equation}\label{ineqS2}
H(\mu):=
\mu^{2k-2}\sum_{n=1}^\infty\frac{2n+1}{((n(n+1)+\mu^2)^k}<
 \frac{1}{k-1}.
\end{equation}
\end{proposition}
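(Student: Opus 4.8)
The plan is to recognize the sum as a Riemann sum that undershoots its continuous analogue. Writing $k=3/2$, so that $\frac1{k-1}=2$ and $\mu^{2k-2}=\mu$, and setting $\psi(x)=\dfrac{2x+1}{(x^2+x+\mu^2)^{3/2}}$, we have $H(\mu)=\mu\sum_{n=1}^\infty\psi(n)$. The substitution $u=x^2+x+\mu^2$ gives $\int_0^\infty\psi(x)\,dx=\int_{\mu^2}^\infty u^{-3/2}\,du=2/\mu$, so for $\mu>0$ (the case $\mu=0$ being trivial since $H(0)=0$) the claim $H(\mu)<2$ is equivalent to the strict inequality $\sum_{n=1}^\infty\psi(n)<\int_0^\infty\psi(x)\,dx$. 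I would then set $a=\mu^2-\tfrac14$ and write $\psi(x)=\phi(x+\tfrac12)$ with $\phi(y)=2y(y^2+a)^{-3/2}$; elementary differentiation gives $\phi'(y)=2(a-2y^2)(y^2+a)^{-5/2}$ and $\phi''(y)=6y(2y^2-3a)(y^2+a)^{-7/2}$, which pins down the signs of $\psi'$ and $\psi''$ explicitly.

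First I would dispose of the range $\mu^2\le\tfrac34$, i.e. $a\le\tfrac12$. Here $(x+\tfrac12)^2\ge\tfrac14$ forces $\phi'(x+\tfrac12)\le0$ for every $x\ge0$, so $\psi$ is decreasing on $[0,\infty)$ and the integral test $\psi(n)\le\int_{n-1}^n\psi$ settles the inequality immediately (strictly, since $\psi$ is not constant).

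The substance is the range $\mu^2>\tfrac34$, where $\psi$ first increases and the comparison has no pointwise sign; moreover the peak of $\psi$ runs off to infinity as $\mu\to\infty$, so crude monotonicity bounds fail. This will be the main obstacle, and I would meet it with a signed Euler--Maclaurin identity. Integrating by parts on each interval $[n-1,n]$ gives $E:=\int_0^\infty\psi-\sum_{n=1}^\infty\psi(n)=-\int_0^\infty\{x\}\psi'(x)\,dx$; then splitting $\{x\}=\tfrac12+(\{x\}-\tfrac12)$ and integrating the mean-zero sawtooth by parts once more produces the exact identity $E=\tfrac12\psi(0)+\int_0^\infty P_1(x)\psi''(x)\,dx$, where $P_1(x)=\tfrac12(\{x\}^2-\{x\})$ is $1$-periodic with values in $[-\tfrac18,0]$ (the boundary terms vanish because $P_1(0)=0$, $P_1$ is bounded, and $\psi,\psi'\to0$ at infinity). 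The key observation is that the concave region contributes favourably, since there $P_1\le0$ and $\psi''\le0$, so it may simply be discarded; on the convex region $P_1\ge-\tfrac18$ yields $\int_0^\infty P_1\psi''\ge-\tfrac18\int_{\{\psi''>0\}}\psi''$. Because $\psi''(x)>0$ exactly for $x+\tfrac12>\sqrt{3a/2}$ and $\psi'(\infty)=0$, the fundamental theorem of calculus evaluates this tail as $-\phi'(\sqrt{3a/2})=4a(5a/2)^{-5/2}$, whence $E\ge\tfrac12(a+\tfrac14)^{-3/2}-\tfrac{a}{2}(5a/2)^{-5/2}$ in closed form.

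It then remains only to check this closed-form lower bound is positive for $a>\tfrac12$. Dividing through by $a^{5/2}$, positivity is equivalent to $(5/2)^{5/2}\ge(1+\tfrac1{4a})^{3/2}$; and for $a\ge\tfrac12$ the right-hand side is at most $(3/2)^{3/2}\approx1.84<9.88\approx(5/2)^{5/2}$, so the inequality is strict and $E>0$. Combining the two ranges gives $\sum_{n=1}^\infty\psi(n)<\int_0^\infty\psi$, hence $H(\mu)<2=\tfrac1{k-1}$ for all $\mu\ge0$. The only points I would write out with care are the two integrations by parts (justifying the vanishing boundary terms from boundedness of $\psi,\psi',P_1$ and decay of $\psi,\psi'$) and the explicit value of the kernel $P_1$ on $[0,1]$; the remainder is the sign bookkeeping described above.
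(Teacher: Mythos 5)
Your argument is correct, and it is genuinely different from --- and in one important respect stronger than --- the proof in the paper. The paper also compares $H(\mu)$ with $\int_0^\infty f(x)\,dx=\frac1{k-1}$, $f(x)=(x+1)^{-k}$, but it does so by recognizing $H(\mu)$ as a trapezoidal sum over the non-uniform partition $a_n=n(n-1)/\mu^2$; since $f$ is convex, every trapezoidal remainder $R_{a_n,a_{n+1}}(f)$ has the unfavourable sign, and the proof must show that these are dominated by the boundary term $\int_{a_1}^{a_2}(f-f(a_2)/2)\,dx$. This is carried out asymptotically (Lemma on the expansion $H(\mu)=\frac1{k-1}-\frac2{3\mu^2}+o(\mu^{-2})$), then made explicit for $\mu\ge\mu_0=5.0833$ (improvable to $3.9229$) in the Appendix, and finally verified \emph{numerically} on the remaining interval $[0,\mu_0]$. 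Your route instead pulls the weight $\mu^{2k-2}$ inside, notes that $\int_0^\infty\psi=2/\mu$ exactly, and applies a second-order Euler--Maclaurin identity on the uniform integer partition; the decisive observation is that after the shift $y=x+\frac12$ the kernel $P_1\le0$ pairs with $\psi''\le0$ on the concave region to give a contribution of the \emph{favourable} sign, so only the convex tail must be estimated, and that tail integrates in closed form by the fundamental theorem of calculus. The resulting lower bound $E\ge\frac12(a+\frac14)^{-3/2}-\frac a2(5a/2)^{-5/2}$ is positive for all $a>\frac12$ by the elementary comparison $(1+\frac1{4a})^{3/2}\le(3/2)^{3/2}<(5/2)^{5/2}$, and the range $a\le\frac12$ is handled by plain monotonicity. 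I checked the computations ($\phi'$, $\phi''$, the two integrations by parts, the vanishing of $P_1$ at integers, $-\phi'(\sqrt{3a/2})=4a(5a/2)^{-5/2}$, $\psi(0)=(a+\frac14)^{-3/2}$) and they are all correct. What your approach buys is a fully analytic proof with no computer verification and no explicit $\mu_0$; what the paper's approach buys is the precise second-order asymptotics $-\frac2{3\mu^2}f(0)$ (your closed-form bound gives only $\approx0.449\,\mu^{-3}$ for $E$ versus the true $\frac23\mu^{-3}$, which is consistent but not sharp) and a template that the author reuses for other exponents such as $k=1.38\ldots$ and for the analogous lattice sums. When you write this up, do state explicitly that for $a>\frac12$ the set $\{\psi''>0\}\cap[0,\infty)$ is the single ray $(\sqrt{3a/2}-\frac12,\infty)$ (this needs $a>1/6$), and justify the absolute convergence of $\int_0^\infty\{x\}\psi'\,dx$ and $\int_0^\infty P_1\psi''\,dx$ from the decay $\phi'(y)=O(y^{-3})$, $\phi''(y)=O(y^{-4})$; these are the only points at which the sketch is not already a complete proof.
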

\begin{proof}
Since
\begin{equation}\label{H(mu)}
H(\mu)=\mu^{-2}\sum_{n=1}^\infty(2n+1)f( n(n+1)/\mu^2),
\end{equation}
where
$$
f(x)=\frac1{(x+1)^k}\quad\text{and}
\quad\int_0^\infty f(x)dx=\frac1{k-1},
$$
the fact that inequality~(\ref{ineqS2}) holds for all
$\mu\ge\mu_0$, where $\mu_0$ is sufficiently large,
follows from Lemma~\ref{L:S2} below, which gives the asymptotic
expansion of $H(\mu)$ for large $\mu$:
$$
H(\mu)=\frac1{k-1}-\frac23\frac1{\mu^2}+o(1/\mu^2).
$$
 The point $\mu_0=5.0833$
is specified in the Appendix (see section~\ref{S:Appen}).
On the  {\it finite} interval $[0,\mu_0]$ we make sure that
(\ref{ineqS2}) holds by  numerical calculations. The graph
of $H(\mu)$ on $[0,\mu_0]$ is shown in Fig.~\ref{fig:S2}.
\end{proof}

\begin{lemma}\label{L:S2} Suppose that $f$ is sufficiently smooth
and sufficiently fast decays at infinity. Then the
following asymptotic expansion as $\mu\to\infty$ holds for
$H(\mu)$ defined in~(\ref{H(mu)}):
\begin{equation}\label{asym}
H(\mu)= \int_0^\infty f(x)dx-
\frac1{\mu^2}\frac23f(0)+o(1/\mu^2).
\end{equation}

\end{lemma}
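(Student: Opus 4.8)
The plan is to recognize the sum as a midpoint-rule (i.e.\ symmetrized Euler--Maclaurin) approximation of $\int_0^\infty f$, and to extract the $1/\mu^2$ correction by tracking exactly two contributions. The crucial algebraic observation is that $n(n+1)=(n+\tfrac12)^2-\tfrac14$ while the weight is $2n+1=2(n+\tfrac12)$. Hence, setting $g(m):=2m\,f\bigl((m^2-\tfrac14)/\mu^2\bigr)$ for real $m\ge1$, we have exactly
\[
\mu^2 H(\mu)=\sum_{n=1}^\infty(2n+1)f\bigl(n(n+1)/\mu^2\bigr)=\sum_{n=1}^\infty g\bigl(n+\tfrac12\bigr),
\]
and the nodes $n+\tfrac12=\tfrac32,\tfrac52,\dots$ are precisely the midpoints of the intervals $[1,2],[2,3],\dots$. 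Thus the sum is the composite midpoint rule for $\int_1^\infty g(m)\,dm$, which is the source of both the leading term and the correction.

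Next I would invoke the midpoint summation (Euler--Maclaurin) expansion. From the single-interval estimate $\int_n^{n+1}g=g(n+\tfrac12)+\tfrac1{24}g''(n+\tfrac12)+\dots$ and summing, together with $\sum_{n\ge1}g''(n+\tfrac12)\approx\int_1^\infty g''\,dm=-g'(1)$ (since $g'(\infty)=0$), one gets
\[
\sum_{n=1}^\infty g\bigl(n+\tfrac12\bigr)=\int_1^\infty g(m)\,dm+\tfrac1{24}\,g'(1)+R,
\]
where $R$ collects the higher-order corrections. Now I evaluate the two explicit pieces. For the integral, the substitution $x=(m^2-\tfrac14)/\mu^2$, $2m\,dm=\mu^2\,dx$, turns it into $\mu^2\int_{3/(4\mu^2)}^{\infty}f(x)\,dx=\mu^2\int_0^\infty f\,dx-\tfrac34 f(0)+O(\mu^{-2})$, using $\int_0^{3/(4\mu^2)}f=\tfrac{3}{4\mu^2}f(0)+O(\mu^{-4})$. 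For the boundary term, $g'(m)=2f+\tfrac{4m^2}{\mu^2}f'$ gives $g'(1)=2f(0)+O(\mu^{-2})$, so $\tfrac1{24}g'(1)=\tfrac1{12}f(0)+O(\mu^{-2})$. Adding the coefficients of $f(0)$ yields $-\tfrac34+\tfrac1{12}=-\tfrac23$, so $\mu^2H(\mu)=\mu^2\int_0^\infty f-\tfrac23 f(0)+o(1)$, which is exactly~(\ref{asym}) after dividing by $\mu^2$.

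The main obstacle is justifying that the remainder $R$, and indeed every higher Euler--Maclaurin term, contributes only $o(1)$ to $\mu^2H(\mu)$ (equivalently $o(1/\mu^2)$ to $H$). This is delicate precisely because $g$ depends on $\mu$, so its derivatives are not bounded uniformly in $\mu$; the decisive point is the chain-rule scaling. Since the argument $(m^2-\tfrac14)/\mu^2$ carries a factor $\mu^{-2}$, each differentiation beyond the first that falls on $f$ produces an extra $\mu^{-2}$, so that near $m=1$ (where the argument is $O(\mu^{-2})$) one finds $g^{(2j+1)}(1)=O(\mu^{-2})$ for all $j\ge1$; moreover the remainder integrals $\int_1^\infty|g^{(\mathrm{odd})}|\,dm$ remain $O(1)$. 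Consequently all terms past $\tfrac1{24}g'(1)$ are $O(\mu^{-2})$, i.e.\ genuinely absorbed into $o(1)$. I would present this scaling estimate carefully (the hypotheses ``sufficiently smooth and fast decaying'' on $f$ give exactly the integrability and boundedness needed), and note as an alternative that applying the ordinary Euler--Maclaurin formula directly to $\psi(n)=(2n+1)f(n(n+1)/\mu^2)$ produces the boundary contributions $\tfrac12\psi(1)=\tfrac32 f(0)$ and $-\tfrac1{12}\psi'(1)=-\tfrac16 f(0)$ against an integral term $\mu^2\int_0^\infty f-2f(0)$, again summing to $-\tfrac23 f(0)$; the symmetrized version above is chosen because it isolates a single correction term and makes the $\mu$-scaling of the remainder transparent.
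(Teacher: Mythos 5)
Your proposal is correct, and it reaches the coefficient $-\tfrac23 f(0)$ by a genuinely different decomposition from the paper's. The paper introduces the nonuniform partition $a_n=(n-1)n/\mu^2$ of the half-line in the $x$-variable and observes that $H(\mu)$ is exactly the composite trapezoidal sum for $\int_0^\infty f$ apart from the first interval $[0,2/\mu^2]$; its correction then splits as $f(0)/\mu^2$ from that boundary interval plus $-\tfrac13 f(0)/\mu^2$ from the summed trapezoidal remainders $-\tfrac1{12}(a_{n+1}-a_n)^3f''(\xi_n)$, the latter evaluated through the limit $\int_0^\infty x^3f''(x^2)\,dx=\tfrac12 f(0)$. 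You instead absorb the weight $2n+1$ into the substitution $m=n+\tfrac12$, turning $\mu^2H(\mu)$ exactly into the midpoint sum of $g(m)=2mf((m^2-\tfrac14)/\mu^2)$ over the uniform partition of $[1,\infty)$, and your correction splits as $-\tfrac34 f(0)$ from the lower limit $3/(4\mu^2)$ plus $\tfrac1{12}f(0)$ from the Euler--Maclaurin boundary term $\tfrac1{24}g'(1)$; indeed $-\tfrac34+\tfrac1{12}=-\tfrac23$. The two computations are linked by the change of variables $x=(m^2-\tfrac14)/\mu^2$ (the paper's nodes $a_n$ are the images of $m=n-\tfrac12$), but yours buys a uniform mesh, a single explicit boundary term, and a transparent $\mu$-scaling of all higher corrections, while the paper's works directly with $f$ and needs only the elementary trapezoid error formula. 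One point to tighten: your remark that the remainder integrals $\int_1^\infty|g^{(\mathrm{odd})}|\,dm$ are $O(1)$ is not by itself sufficient --- the Euler--Maclaurin remainder after $\tfrac1{24}g'(1)$ must be $o(1)$. It is, because $\int_1^\infty|g'''|\,dm=O(1/\mu)$ by exactly the chain-rule scaling you describe (each derivative falling on $f$ costs $\mu^{-2}$, while the substitution $m\sim\mu\sqrt{x}$ returns only a single factor of $\mu$); with that bound stated explicitly your argument is complete and at the same level of rigor as the paper's own interchange of sum and limit.
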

\begin{proof}
We consider the following partitioning of the half-line
$x\ge 0$ by the points
$$
a_n\,=\,a_n(\mu)\,=\,\frac {(n-1)n}{\mu^2},\ \ n=1,\dots\,.
$$
Then a direct inspection shows that
$$
\aligned
\mu^{-2}\sum_{n=1}^\infty n f(n(n+1)/\mu^2)
\,&=\,\frac 12
\sum_{n=1}^\infty f(a_{n+1})(a_{n+1}-a_n),\\
\mu^{-2}\sum_{n=1}^\infty(n+1)f(n(n+1)/\mu^2)
\,&=\,\frac 12
\sum_{n=1}^\infty f(a_{n+1})(a_{n+2}-a_{n+1}).
\endaligned
$$
Therefore
$$
H(\mu)\,=\,
\frac 12 f(a_2)(a_2-a_1)\,+\,\sum\limits_{n=2}^\infty
\frac {f(a_n)+f(a_{n+1})}2\,(a_{n+1}-a_n).
$$
Next, we recall
the trapezoidal formula for the approximate
calculation of the integrals (see, for instance, \cite{Krylov}):
\begin{equation}\label{trap}
\int\limits_a^bf(x)dx=\frac{f(a)+f(b)}2(b-a)+ R_{a,b}(f),
\end{equation}
where $$ R_{a,b}(f)=-\frac {(b-a)^3}{12}\,f^{''}(\xi), \ \
a<\xi<b\,.
$$
 This gives
\begin{equation}\label{int-sum1}
\aligned &\int_0^\infty
f(x)dx=\sum_{n=1}^\infty\int_{a_n}^{a_{n+1}}f(x)dx\\=
 \int_{a_1}^{a_2}f(x)dx&+
\sum_{n=2}^\infty\frac{f(a_n)+f(a_{n+1})}2\,(a_{n+1}-a_n)\,+\,
\sum\limits_{n=2}^\infty R_{a_n,a_{n+1}}(f)\\= &H(\mu)\,+\,
\int_{a_1}^{a_2}(f(x)- f(a_2)/2)dx\,+\, \sum_{n=2}^\infty
R_{a_n,a_{n+1}}(f).
\endaligned
\end{equation}
Since $a_1=0$ and $a_2=2/\mu^2$ we clearly have
$$
\lim_{\mu\to\infty}
\mu^2\int_{a_1}^{a_2}(f(x)-
f(a_2)/2)dx=f(0).
$$
For the third term, using~(\ref{trap})  with
\begin{equation}\label{ksi}
\xi_n\in (a_n,a_{n+1}),\quad
\xi_n=\frac{n^2}{\mu^2} +\frac {\theta_nn}{\mu^2}, \quad
|\theta_n|<1
\end{equation}
we obtain
$$
\aligned \lim_{\mu\to\infty}\mu^2 \sum_{n=2}^\infty
R_{a_n,a_{n+1}}(f)=
-\frac23\lim_{\mu\to\infty}\frac1\mu\sum_{n=1}^\infty
(n/\mu)^3f''(\xi_n)\\=
-\frac23\lim_{\mu\to\infty}\frac1\mu\sum_{n=1}^\infty
(n/\mu)^3f''(n^2/\mu^2)= -\frac23\int_0^\infty
x^3f''(x^2)dx= -\frac13f(0),
\endaligned
$$
as the following integration by parts shows:
$$
\aligned
\int_0^\infty x^3f''(x^2)dx=\frac12\int_0^\infty
x^2\left[f'(x^2)\right]'_xdx=
-\int_0^\infty x f'(x^2)dx=
\frac12  f(0).
\endaligned
$$
Thus, the last two terms in~(\ref{int-sum1}) are both of
order $1/\mu^2$ and add up to $\frac2{3\mu^2}f(0)$. The
proof is complete.
\end{proof}
\begin{proposition}\label{P:T2}
For $\mu\ge0$ and $k=3/2$
\begin{equation}\label{ineqT2}
F(\mu):= \mu^{2k-2}\sum_{m\in\mathbb{Z}^2_0}
\frac{1}{(|m|^2+\mu^2)^k}<
 \frac{\pi}{k-1}.
\end{equation}
\end{proposition}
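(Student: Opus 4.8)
The plan is to compare the lattice sum with its $\mathbb{R}^2$ counterpart by passing to the dual lattice via Poisson summation, which for this radial kernel produces a completely explicit theta-type correction. First I would record the value of the model integral: in polar coordinates $\int_{\mathbb{R}^2}(|\xi|^2+\mu^2)^{-k}\,d\xi=\frac{\pi}{k-1}\mu^{2-2k}$, so that $\mu^{2k-2}\int_{\mathbb{R}^2}(|\xi|^2+\mu^2)^{-k}\,d\xi=\frac{\pi}{k-1}$ is exactly the right-hand side of~(\ref{ineqT2}). Thus the assertion is that the punctured lattice sum is strictly smaller than the full-plane integral, the deficit being morally the missing value at the origin.

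The key step is to apply the Poisson summation formula to $g(\xi)=(|\xi|^2+\mu^2)^{-3/2}$ on $\mathbb{Z}^2$. Writing $g$ through the subordination identity $g(\xi)=\frac{1}{\Gamma(3/2)}\int_0^\infty t^{1/2}e^{-t(|\xi|^2+\mu^2)}\,dt$ and using the Gaussian transform $\int_{\mathbb{R}^2}e^{-t|\xi|^2-2\pi i y\cdot\xi}\,d\xi=\frac{\pi}{t}e^{-\pi^2|y|^2/t}$ together with $\int_0^\infty t^{-1/2}e^{-at-b/t}\,dt=\sqrt{\pi/a}\,e^{-2\sqrt{ab}}$, I would obtain the closed form $\widehat g(y)=\frac{2\pi}{\mu}e^{-2\pi\mu|y|}$. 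Poisson summation then yields $\sum_{m\in\mathbb{Z}^2}(|m|^2+\mu^2)^{-3/2}=\frac{2\pi}{\mu}\sum_{n\in\mathbb{Z}^2}e^{-2\pi\mu|n|}$, and subtracting the $m=0$ term $\mu^{-3}$ and multiplying by $\mu^{2k-2}=\mu$ gives the identity $F(\mu)=2\pi+2\pi\Theta(\mu)-\mu^{-2}$, where $\Theta(\mu):=\sum_{n\in\mathbb{Z}^2_0}e^{-2\pi\mu|n|}>0$. Hence~(\ref{ineqT2}) is equivalent to the clean inequality $2\pi\Theta(\mu)<\mu^{-2}$ for every $\mu>0$ (the endpoint $\mu=0$ being trivial, as then $F(0)=0$).

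To finish I would split the range of $\mu$ at a convenient point, say $\mu_0=1/2$. For $\mu\ge\mu_0$ I exploit the exponential smallness of $\Theta$: from $|n|\ge(|n_1|+|n_2|)/\sqrt2$ and the geometric sum $\sum_{j\in\mathbb{Z}}e^{-\beta|j|}=\coth(\beta/2)$ one gets $\Theta(\mu)\le\coth^2(\pi\mu/\sqrt2)-1=\sinh^{-2}(\pi\mu/\sqrt2)$, so it suffices that $\sinh^2(\pi\mu/\sqrt2)>2\pi\mu^2$. Since $t\cosh t-\sinh t>0$ for $t>0$, the map $\mu\mapsto\sinh(c\mu)/\mu$ is increasing, reducing this to a single check at $\mu=\mu_0$, which holds with room to spare. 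For $0<\mu\le\mu_0$ I return to the original form and use the crude bound $F(\mu)=\mu\sum_{m\in\mathbb{Z}^2_0}(|m|^2+\mu^2)^{-3/2}<\mu\sum_{m\in\mathbb{Z}^2_0}|m|^{-3}=\mu\,C$, with $C=4\zeta(3/2)\beta(3/2)\approx9.08$; since $C<4\pi$ and $\mu_0=1/2=2\pi/(4\pi)$, this yields $F(\mu)<\mu_0 C<2\pi$ on the whole interval. One could alternatively imitate the sphere case and combine a large-$\mu$ asymptotic expansion with a numerical check on a finite interval, but the Poisson identity makes the asymptotics exact and transparent.

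The main obstacle is that the inequality is asymptotically sharp as $\mu\to0$: both $2\pi\Theta(\mu)$ and $\mu^{-2}$ diverge like $\mu^{-2}$, so no single global estimate can decide the sign of the deficit. The argument must therefore use the exponential decay of $\Theta$ in the large-$\mu$ regime (where polynomial growth beats exponential decay) and the convergence of the Epstein-type series $\sum_{m\in\mathbb{Z}^2_0}|m|^{-3}$ in the small-$\mu$ regime, and crucially verify that the two regimes overlap so that the chosen split point is simultaneously admissible for both bounds. Pinning down the constant in $\widehat g(y)=\frac{2\pi}{\mu}e^{-2\pi\mu|y|}$ and confirming the monotonicity of $\sinh(c\mu)/\mu$ are the technical linchpins.
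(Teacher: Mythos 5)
Your proposal is correct, and its first half coincides with the paper's: the paper also applies Poisson summation to $f(x)=(1+|x|^2)^{-3/2}$, uses the explicit transform (\ref{Four_Stein}) to get $F(\mu)=2\pi-\mu^{-2}+2\pi\sum_{m\in\mathbb{Z}^2_0}e^{-2\pi\mu|m|}$, and reduces the claim to $2\pi\sum_{m\in\mathbb{Z}^2_0}e^{-2\pi\mu|m|}<\mu^{-2}$ (your identity $\widehat g(y)=\frac{2\pi}{\mu}e^{-2\pi\mu|y|}$ checks out). Where you diverge is in how this reduced inequality is established. The paper bounds the theta sum by ordering the values $|m|^2=\lambda_j$ and using the lattice-point count $\lambda_j\ge j/8$ (improvable to $j/4$), which yields the inequality only for $\mu\ge\mu_0\approx 2.09$ (resp.\ $1.04$); the remaining finite interval $[0,\mu_0]$ is then verified \emph{numerically on a computer}. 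You instead exploit the product structure via $|n|\ge(|n_1|+|n_2|)/\sqrt2$ to get the closed-form bound $\Theta(\mu)\le\sinh^{-2}(\pi\mu/\sqrt2)$, which together with the monotonicity of $\sinh(c\mu)/\mu$ pushes the analytic regime down to $\mu\ge 1/2$ (the check there is $\sinh(\pi/(2\sqrt2))/(1/2)\approx 2.707>\sqrt{2\pi}\approx 2.507$ --- a modest but genuine margin), and you cover $0<\mu\le 1/2$ by the Epstein-zeta bound $F(\mu)<\mu\cdot 4\zeta(3/2)\beta(3/2)\approx 9.08\,\mu<2\pi$. The payoff is a fully analytic, computer-free proof of the proposition; the cost is essentially nil, since all your estimates are elementary and each numerical verification reduces to a single explicit inequality between named constants. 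Your closing remark correctly identifies the one place where care is needed: the reduced inequality $2\pi\Theta(\mu)<\mu^{-2}$ degenerates as $\mu\to0$ (both sides blow up like $\mu^{-2}$), so one must switch back to the original form of $F$ on the small-$\mu$ range, and your two regimes do overlap at $\mu=1/2$.
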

\begin{proof} The function $F(\mu)$ for $k>1$ has the following
asymptotic expansion as $\mu\to\infty$:
\begin{equation}\label{expexp}
F(\mu)=
\frac\pi{k-1}-\frac1{\mu^2}+O(e^{-C\mu}).
\end{equation}
This follows from the the Poisson summation formula (see,
e.\,g., \cite{S-W})
\begin{equation}\label{Poisson}
\sum_{m\in\mathbb{Z}^n}f(m/\mu)=
(2\pi)^{n/2}\mu^n
\sum_{m\in\mathbb{Z}^n}\widehat{f}(2\pi m \mu),
\end{equation}
where
$\mathcal{F}(f)(\xi)=\widehat{f}(\xi)=(2\pi)^{-n/2}\int_{\mathbb{R}^n}
f(x)e^{i\xi x}dx$. For the function $f(x)=1/(1+x^2)^{-k}$,
$x\in\mathbb{R}^2$, this gives
$$
\aligned
F(\mu)=
\frac1{\mu^2}\sum_{m\in\mathbb{Z}^2}f(m/\mu)-\frac1{\mu^2}f(0)=
\frac\pi{k-1}-\frac1{\mu^2}+
2\pi\sum_{m\in\mathbb{Z}^2_0}\widehat{f}(2\pi\mu m).
\endaligned
$$
The third term is exponentially small
as $\mu\to\infty$
since $f$ is analytic in the strip $\Re z_1<a$, $\Re
z_2<a$, $a<\sqrt{2}/2$, and therefore $|\widehat f(\xi)|\le
C(a,k)e^{-a|\xi|}$, see Remark~\ref{R:all_k}.
This proves~(\ref{expexp}). Hence
(\ref{ineqT2}) holds for all $\mu\in[\mu_0,\infty)$.

To specify $\mu_0$ for $k=3/2$  we  take advantage of the  formula
\cite{S-W}:
\begin{equation}\label{Four_Stein}
\mathcal{F}\bigl(1/(1+x^2)^{(n+1)/2}\bigr)(\xi)=
\frac1{c_n(2\pi)^{n/2}}e^{-|\xi|},
\quad x\in\mathbb{R}^n,
\end{equation}
where
$$
\frac1{c_n}=\frac{\pi^{(n+1)/2}}{\Gamma((n+1)/2)}=
\int_{\mathbb{R}^n}\frac{dx}{(1+x^2)^{(n+1)/2}}.
$$
In the two-dimensional case with $k=3/2$
$$
F(\mu)=
\frac\pi{k-1}-\frac1{\mu^2}+2\pi\sum_{m\in\mathbb{Z}^2_0}e^{-2\pi\mu|m|}.
$$
Therefore (\ref{ineqT2}) is equivalent to showing that the
inequality
\begin{equation}\label{asexp}
2\pi\sum_{m\in\mathbb{Z}^2_0}e^{-2\pi\mu|m|}<\frac1{\mu^2}
\end{equation}
holds for all $\mu>0$. To estimate the series on the
right-hand side we write down the numbers
$|m|^2=m_1^2+m_2^2$, $m\in\mathbb{Z}^2_0$, in the
increasing order counting multiplicities and denote them by $\lambda_j$:
$
\{\lambda_j\}_{j=1}^\infty=\{m_1^2+m_2^2,\ m\in\mathbb{Z}^2_0\}.
$
For $\lambda\ge1$ we denote by $N(\lambda)$ the number of
$\lambda_j$'s less than or equal to $\lambda$
(the number of points with integer coordinates inside the circle of
radius $\sqrt{\lambda}$):
$$
N(\lambda)=\sum_{\lambda_j\le\lambda}1.
$$
We inscribe the circle of radius $\sqrt{\lambda}$ into the
square with side $2\sqrt{\lambda}+1$ and cross out the
origin. We obtain
$$
N(\lambda)\le(2\sqrt{\lambda}+1)^2-1=4\lambda+4\sqrt{\lambda}\le
8\lambda.
$$
For $\lambda=\lambda_j$ this gives
$j=N(\lambda_j)\le8\lambda_j$ so that
$\lambda_j\ge\frac j8$.

Returning to~(\ref{asexp}) and setting below
$L:=\pi\mu/2\sqrt{2}$  we have
$$
\aligned
\sum_{m\in\mathbb{Z}^2_0}e^{-2\pi\mu|m|}=\sum_{j=1}^\infty
e^{-2\pi\mu\lambda_j^{1/2}}\le \sum_{j=1}^\infty
e^{-2Lj^{1/2}}= e^{-L}\sum_{j=1}^\infty
e^{-L(2j^{1/2}-1)}
\\
\le
 e^{-L}\sum_{j=1}^\infty
e^{-Lj^{1/2}}< e^{-L}\int_0^\infty e^{-Lx^{1/2}}dx=
\frac{2e^{-L}}{L^2}=\frac{16}{\pi^2\mu^2}e^{-\frac{\pi\mu}{2\sqrt{2}}},
\endaligned
$$
and inequality~(\ref{asexp}) is satisfied for all
$\mu\ge\mu_0=\frac{2\sqrt{2}}\pi\log\frac{32}\pi= 2.0896$.
In fact, $\lambda_j\ge j/4$ (see \cite{I-M-T}), which gives
$\mu\ge\mu_0=\frac{2}\pi\log\frac{16}\pi=1.0363$. On the
{\it finite} interval $[0,\mu_0]$ we verify~(\ref{ineqT2})
on a computer, see Fig.~\ref{fig:S2}.
\end{proof}
\begin{remark}\label{R:all_k}
{\rm
Shifting for $x_1$ and $x_2$ the domain of integration by $\pm ia$
and using analyticity  we obtain
$$
|\widehat
f(\xi)|\le\frac{e^{-a|\xi|}}{2(k-1)(1-2a^2)^{k-1}},
$$
and we can specify $\mu_0$ for any fixed $k>1$ similarly to $k=3/2$.
}
\end{remark}
\begin{remark}\label{R:1.38}
{\rm
Inequalities (\ref{ineqS2}) and (\ref{ineqT2}) hold for
$k=1.38\dots$.
}
\end{remark}

\begin{figure}[htb]
\centerline{\psfig{file=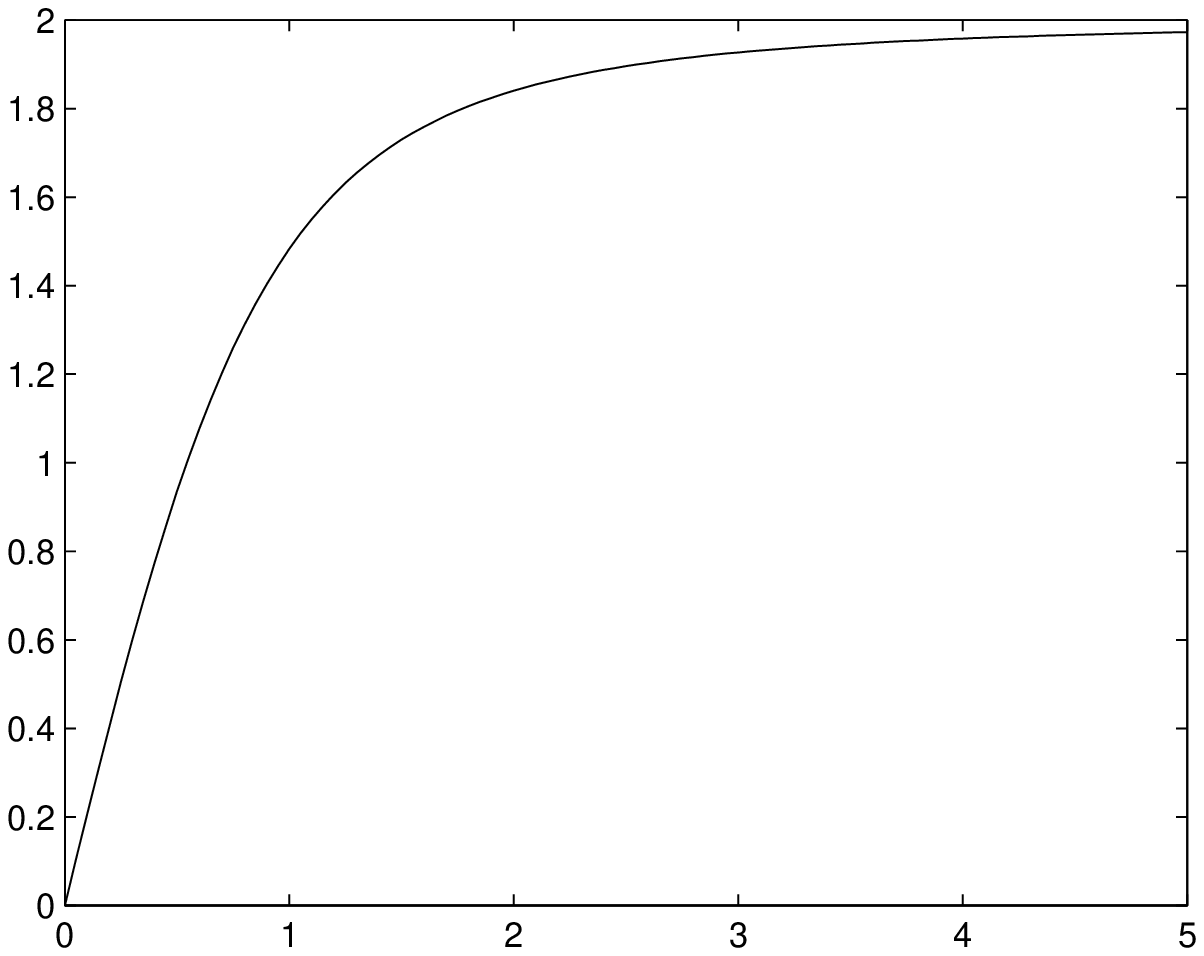,width=6.5cm,height=5cm,angle=0}
\psfig{file=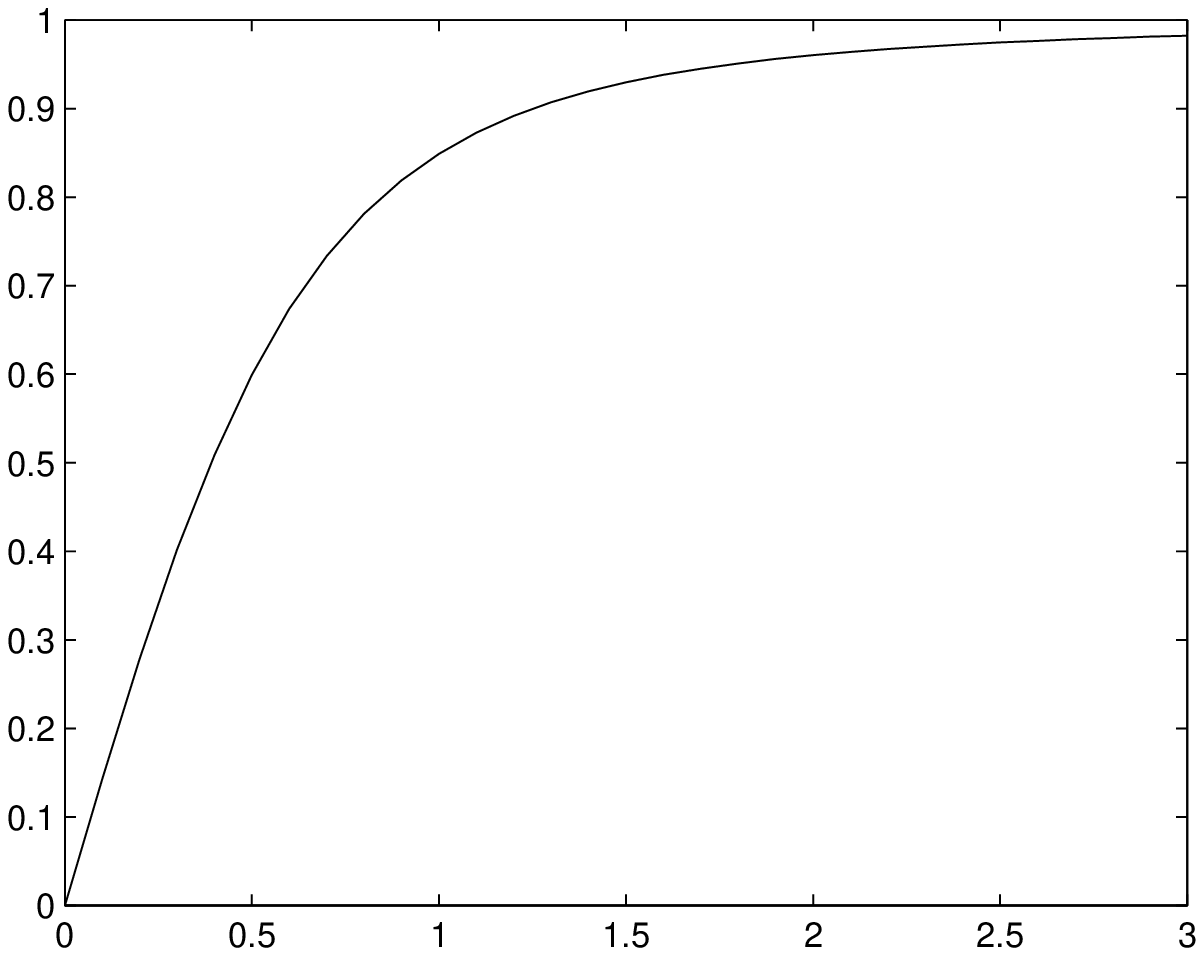,width=6.5cm,height=5cm,angle=0} }
\caption{Graphs of the functions $H(\mu)$ and
$\frac{k-1}\pi F(\mu)$ on the corresponding intervals
$[0,\mu_0]$ for $k=3/2$.} \label{fig:S2}
\end{figure}

\begin{proposition}\label{P:T3S3}
The following inequalities hold for $\mu\ge0$:
\begin{equation}\label{ineqT3S3}
\aligned
H_{\mathbb{S}^3}(\mu)&:= \mu\sum_{n=1}^\infty
\frac{(n+1)^2}{(n(n+2)+\mu^2)^2}\le
\delta_{\mathbb{S}^3}\int_0^\infty \frac{r^2dr}{(r^2+1)^2}=
\delta_{\mathbb{S}^3}\cdot\frac\pi 4,\\
F_{\mathbb{T}^3}(\mu)&:=\mu\sum_{m\in\mathbb{Z}^3_0}
\frac{1}{(|m|^2+\mu^2)^2}<\delta_{\mathbb{T}^3}\int_{\mathbb{R}^3}
\frac{dx}{(x^2+1)^2}=\delta_{\mathbb{T}^3}\cdot\pi^2,
\endaligned
\end{equation}
where $\delta_{\mathbb{S}^3}=1.0139\dots$ and
$\delta_{\mathbb{T}^3}=1$.
\end{proposition}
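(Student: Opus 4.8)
The plan is to handle the two sums by the same two-stage scheme already used for Propositions~\ref{P:S2} and~\ref{P:T2}: extract the exact leading behaviour as $\mu\to\infty$ analytically, conclude that the desired bound holds on a half-line $[\mu_0,\infty)$, and then verify the inequality on the remaining finite interval $[0,\mu_0]$ by numerical computation, the supremum over $\mu$ fixing the constants $\delta_{\mathbb{S}^3}$ and $\delta_{\mathbb{T}^3}$.

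For the torus I would argue exactly as in Proposition~\ref{P:T2}, now with $n=3$ and $k=2$. Writing $f(x)=(1+x^2)^{-2}$ on $\mathbb{R}^3$ and $(|m|^2+\mu^2)^{-2}=\mu^{-4}f(m/\mu)$, one has $F_{\mathbb{T}^3}(\mu)=\mu^{-3}\bigl(\sum_{m\in\mathbb{Z}^3}f(m/\mu)-1\bigr)$. The Poisson formula~(\ref{Poisson}) together with~(\ref{Four_Stein}), which applies verbatim since $(n+1)/2=2=k$ and $1/c_3=\pi^2$, gives the clean identity
\[
F_{\mathbb{T}^3}(\mu)=\pi^2-\frac1{\mu^3}+\pi^2\sum_{m\in\mathbb{Z}^3_0}e^{-2\pi\mu|m|}.
\]
Thus $\delta_{\mathbb{T}^3}=1$ is equivalent to the three-dimensional analogue of~(\ref{asexp}), namely $\pi^2\sum_{m\in\mathbb{Z}^3_0}e^{-2\pi\mu|m|}<\mu^{-3}$ for every $\mu>0$. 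I would bound this lattice sum exactly as in Proposition~\ref{P:T2}: ordering the values $|m|^2$ as $\lambda_j$ and using a three-dimensional lattice-point count $N(\lambda)\le C\lambda^{3/2}$ to obtain $\lambda_j\ge c\,j^{2/3}$, then dominating $\sum_j e^{-2\pi\mu\lambda_j^{1/2}}$ by the elementary integral $\int_0^\infty e^{-2\pi\mu(x/C)^{1/3}}dx$. This yields the inequality for $\mu\ge\mu_0$, and the finite range is checked on a computer.

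For the sphere the cleanest route is to pass to a closed form. Since $n(n+2)=(n+1)^2-1$, the substitution $j=n+1$ together with $\nu^2=\mu^2-1$ turns the sum into
\[
H_{\mathbb{S}^3}(\mu)=\mu\Bigl(\sum_{j=1}^\infty\frac{j^2}{(j^2+\nu^2)^2}-\frac1{(1+\nu^2)^2}\Bigr),
\]
where the subtracted $j=1$ term restores the range $n\ge1$. The Mittag--Leffler expansion $S_1(\nu):=\sum_{j\ge1}(j^2+\nu^2)^{-1}=\tfrac{\pi\coth\pi\nu}{2\nu}-\tfrac1{2\nu^2}$, differentiated once in $\nu$, gives $\sum_{j\ge1}\tfrac{j^2}{(j^2+\nu^2)^2}=S_1(\nu)+\tfrac\nu2 S_1'(\nu)$, so that $H_{\mathbb{S}^3}$ becomes an explicit elementary function of $\mu$ (for $\mu>1$; for $0\le\mu<1$ one uses the original convergent series directly, the apparent singularities of the two pieces cancelling). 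From this form $H_{\mathbb{S}^3}(\mu)\to\pi/4$ as $\mu\to\infty$ is immediate, and its maximum over $\mu$ can be located rigorously; that maximum divided by $\pi/4$ is $\delta_{\mathbb{S}^3}=1.0139\dots$. Alternatively one can follow Lemma~\ref{L:S2} literally, replacing the weight $2n+1$ and eigenvalue $n(n+1)$ by $(n+1)^2$ and $n(n+2)$ in the trapezoidal/Euler--Maclaurin expansion, to recover the same leading term $\pi/4$ plus a computable correction, and then verify $[0,\mu_0]$ numerically.

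The main obstacle lies in the \emph{finite-range} step rather than in the asymptotics. For $\mathbb{T}^3$ the inequality is asymptotically tight as $\mu\to0$: since $\sum_{m\neq0}e^{-2\pi\mu|m|}\sim\int_{\mathbb{R}^3}e^{-2\pi\mu|x|}dx=\pi^{-2}\mu^{-3}$, the two $\mu^{-3}$ terms cancel exactly, and correspondingly the sharp counting constant $C\to\tfrac43\pi$ is precisely the borderline for $\tfrac{3C}{4\pi}<1$. Hence a crude lattice-counting estimate leaves essentially no margin near the origin, and strict inequality there must be secured by a sufficiently sharp count or by the numerical check. For $\mathbb{S}^3$ the difficulty is of a different nature: because $\delta_{\mathbb{S}^3}>1$ the extremal value is attained at an interior point $\mu^\ast$ rather than in the limit, so the analytic expansion only disposes of the tail $\mu\ge\mu_0$, and the genuine content of the statement is the reliable maximization of $H_{\mathbb{S}^3}$ on the finite interval yielding the constant $1.0139\dots$.
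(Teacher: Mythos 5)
Your proposal matches the paper's proof in all essentials: for $\mathbb{T}^3$ the Poisson summation identity $F_{\mathbb{T}^3}(\mu)=\pi^2-\mu^{-3}+\pi^2\sum_{m\neq0}e^{-2\pi\mu|m|}$ with a lattice-count tail bound plus a numerical check on $[0,\mu_0]$, and for $\mathbb{S}^3$ the reduction via $n(n+2)=(n+1)^2-1$ to the explicit cotangent formula followed by numerical location of the interior maximum giving $\delta_{\mathbb{S}^3}=1.0139\dots$. This is exactly the route taken in the paper, so no further comment is needed.
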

\begin{proof} Calculations show that the function
$H_{\mathbb{S}^3}(\mu)$ attains a global maximum
at $\mu_*=3.312\dots$, which is $1.0139\ldots=:\delta_{\mathbb{S}^3}$
times greater than $H_{\mathbb{S}^3}(\infty)=\pi/4$.
In calculations we can also take
advantage of the fact that for  $H_{\mathbb{S}^3}(\mu)$ there exists an
explicit formula. In fact, using the formula
$$
\sum_{n=1}^\infty
\frac{n^2}{(n^2+\nu^2)^2}=\frac\pi 4\frac{\coth(\pi
\nu)}\nu+
\frac{\pi^2}4\left(1-\coth^2(\pi\nu)\right),
$$
and noting that $n(n+2)=(n+1)^2-1$ we see that
$H_{\mathbb{S}^3}(\mu)$ is equal to
$$
\frac\pi 4\frac\mu{\sqrt{\mu^2-1}}\coth(\pi
\sqrt{\mu^2-1})+
\frac{\pi^2\mu}4\left(1-\coth^2(\pi\sqrt{\mu^2-1})\right)-
\frac1{\mu^3}.
$$
Unlike the 2D case,  for large $\mu$,
$H_{\mathbb{S}^3}(\mu)>H_{\mathbb{S}^3}(\infty)=\pi/4$.

For the second sum the Poisson summation formula
and~(\ref{Four_Stein}) give
$$
F_{\mathbb{T}^3}(\mu)=
\pi^2-\frac1{\mu^3}+\pi^2\sum_{m\in\mathbb{Z}^3_0}e^{-2\pi\mu|m|}=
\pi^2-\frac1{\mu^3}+O(e^{-C\mu}).
$$
We  find a $\mu_0$ such that $F_{\mathbb{T}^3}(\mu)<
\pi^2$ on $[\mu_0,\infty)$ and then verify the inequality
on the remaining  finite interval $[0,\mu_0]$ by calculations.
We omit the details concerning $\mu_0$ that
are similar to those in Proposition~\ref{P:T2}. The graphs
of $H_{\mathbb{S}^3}(\mu)$ and $F_{\mathbb{T}^3}(\mu)$ are
shown in Fig.~\ref{fig:S3}.
\end{proof}
\begin{figure}[htb]
\centerline{\psfig{file=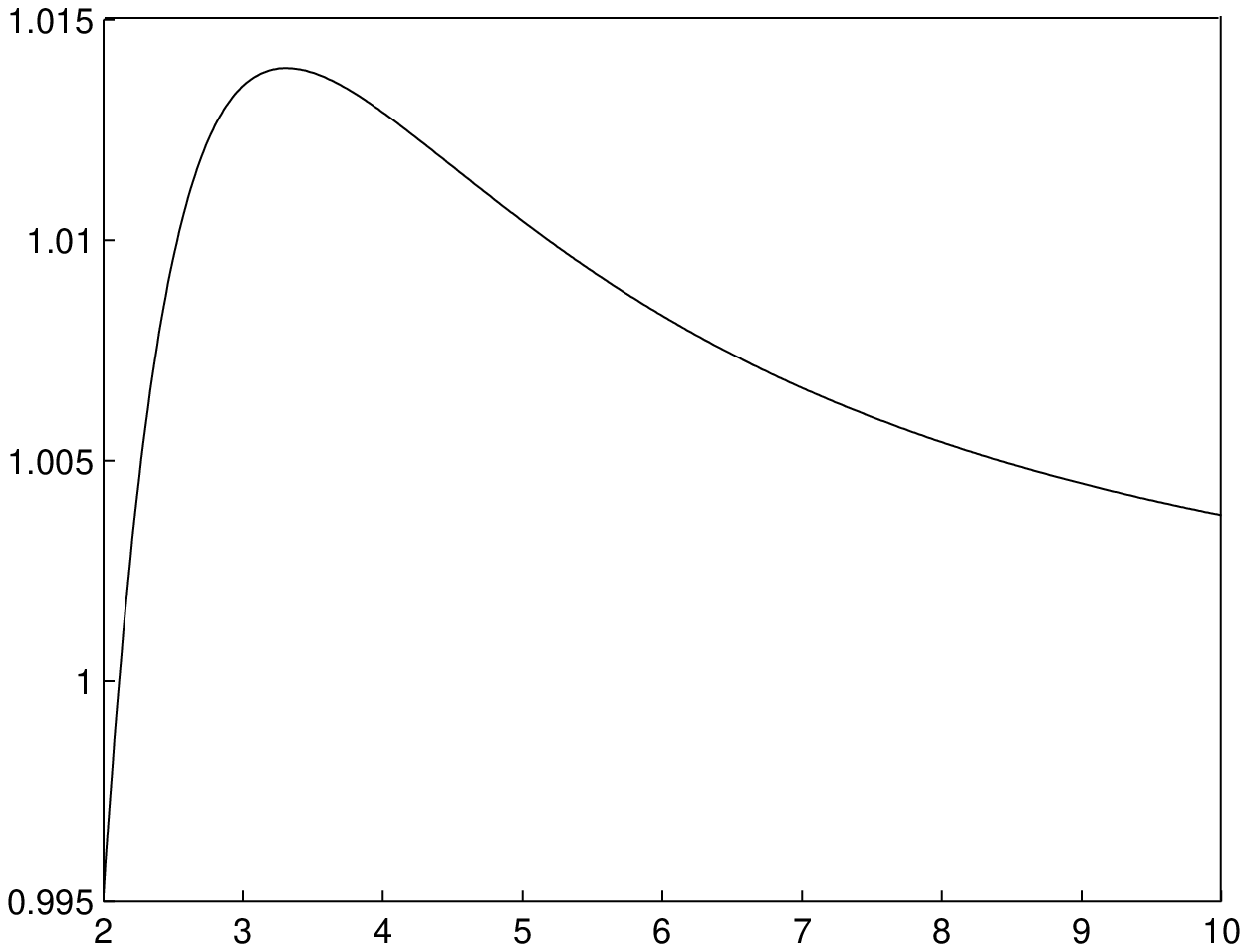,width=6.5cm,height=5cm,angle=0}
\psfig{file=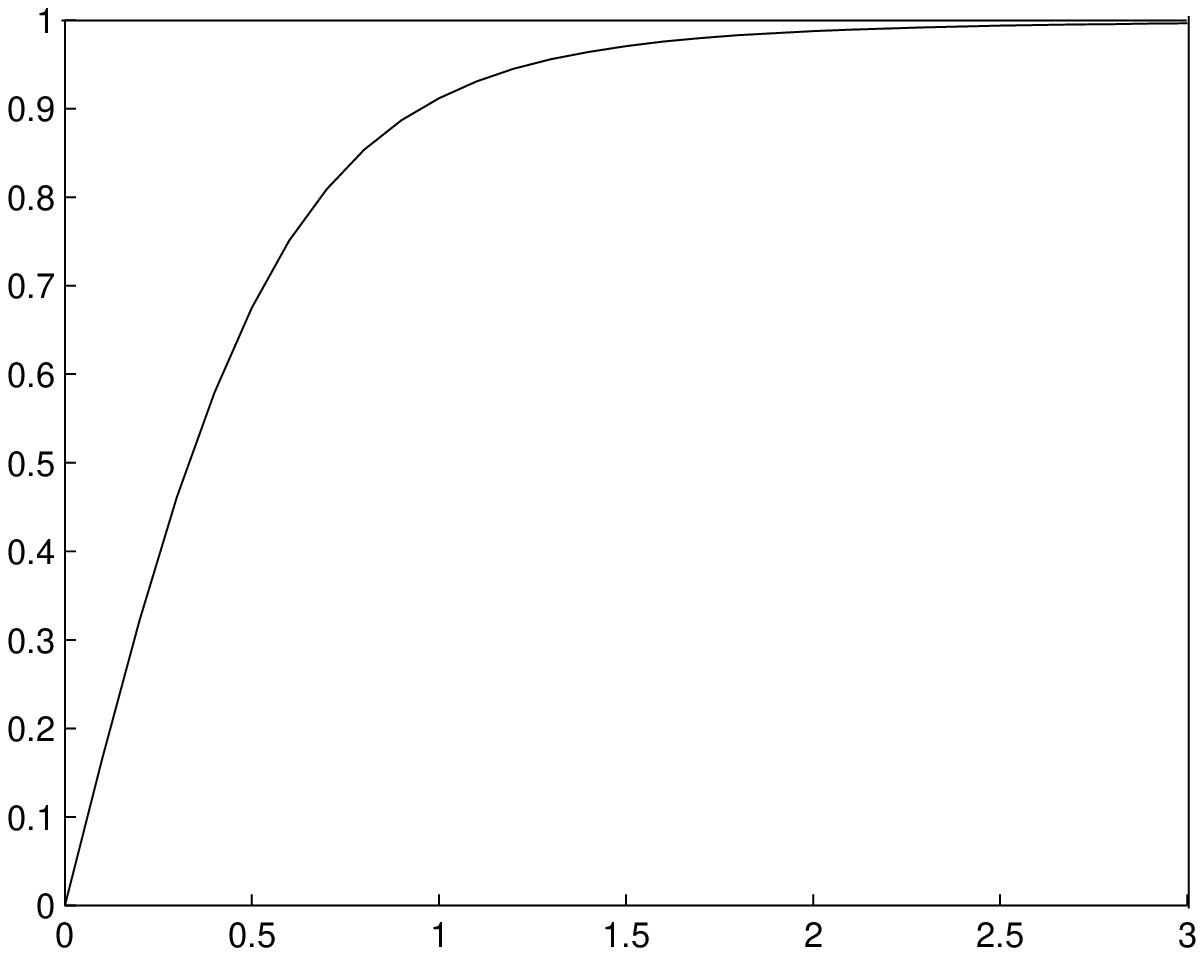,width=6.5cm,height=5cm,angle=0} }
\caption{Graphs of the functions $\frac4\pi
H_{\mathbb{S}^3}(\mu)$ and
$\frac1{\pi^2}F_{\mathbb{T}^3}(\mu)$.} \label{fig:S3}
\end{figure}

\setcounter{equation}{0}
\section{Appendix. Estimate of $\mu_0$ for the sphere}\label{S:Appen}
\begin{lemma}\label{A:mu0}
For $k=3/2$ inequality~(\ref{ineqS2}) holds for
$\mu\in[\mu_0,\infty)$, where $\mu_0=5.0833$.
\end{lemma}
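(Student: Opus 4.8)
The plan is to turn the \emph{asymptotic} statement of Lemma~\ref{L:S2} into an \emph{exact} finite-$\mu$ estimate and reduce the claim to a single elementary inequality in $\mu$. The crucial observation is that the chain~(\ref{int-sum1}) is an exact identity valid for every $\mu>0$, not just a source of an asymptotic limit. With $f(x)=(x+1)^{-k}$, $k=3/2$, so that $\int_0^\infty f\,dx=1/(k-1)=2$ and $f''(x)=\tfrac{15}{4}(x+1)^{-7/2}>0$, rearranging~(\ref{int-sum1}) gives
$$
H(\mu)=2-I_1(\mu)+S(\mu),
$$
where $I_1(\mu)=\int_{a_1}^{a_2}\bigl(f(x)-\tfrac12 f(a_2)\bigr)\,dx$ and $S(\mu)=-\sum_{n\ge2}R_{a_n,a_{n+1}}(f)=\sum_{n\ge2}|R_{a_n,a_{n+1}}(f)|\ge0$ (the trapezoidal remainders are negative because $f$ is convex). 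Thus inequality~(\ref{ineqS2}) for $k=3/2$ is \emph{equivalent} to $S(\mu)<I_1(\mu)$, and the whole task is to bound the positive quantity $I_1$ from below and the positive quantity $S$ from above by explicit functions of $\mu$.

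The term $I_1$ is elementary: since $a_1=0$ and $a_2=2/\mu^2$, a direct integration yields the closed form
$$
I_1(\mu)=2-\frac{2\mu}{\sqrt{\mu^2+2}}-\frac{\mu}{(\mu^2+2)^{3/2}}=\frac1{\mu^2}+O(\mu^{-6}).
$$
For $S$ I would use the exact Peano-kernel form of the trapezoidal error, $|R_{a_n,a_{n+1}}(f)|=\tfrac12\int_{a_n}^{a_{n+1}}(x-a_n)(a_{n+1}-x)f''(x)\,dx$, together with the pointwise bound $(x-a_n)(a_{n+1}-x)\le(a_{n+1}-a_n)^2/4=n^2/\mu^4$ (recall $a_{n+1}-a_n=2n/\mu^2$). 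This gives $S(\mu)\le\frac1{2\mu^4}\sum_{n\ge2}n^2\bigl(f'(a_{n+1})-f'(a_n)\bigr)$, and a summation by parts (the boundary term $N^2 f'(a_{N+1})\to0$) turns this into
$$
S(\mu)\le\frac{3\mu}{(\mu^2+2)^{5/2}}+\frac{3\mu}{4}\sum_{m\ge3}\frac{2m-1}{(m^2-m+\mu^2)^{5/2}}.
$$
Consistency with~(\ref{asym}) is reassuring: the true $S$ behaves like $\tfrac1{3\mu^2}$, so that $-I_1+S\sim-\tfrac2{3\mu^2}$ reproduces the $-\tfrac23\mu^{-2}f(0)$ term, while the bound above is only mildly lossy, of size $\sim\tfrac1{2\mu^2}$.

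The decisive structural point is that the numerator $2m-1$ is exactly the derivative of $m^2-m+\mu^2$, so $p(m):=(2m-1)(m^2-m+\mu^2)^{-5/2}$ integrates in closed form, $\int p\,dm=-\tfrac23(m^2-m+\mu^2)^{-3/2}$. Comparing the series with this integral gives $\sum_{m\ge3}p(m)\le\int_2^\infty p(t)\,dt+\max_m p(m)=\tfrac23(\mu^2+2)^{-3/2}+O(\mu^{-4})$, and hence an explicit upper bound $S(\mu)\le G(\mu)$ with $G(\mu)=\tfrac12\mu(\mu^2+2)^{-3/2}+O(\mu^{-3})\sim\tfrac1{2\mu^2}$. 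Since $I_1\sim\mu^{-2}$, there is a comfortable asymptotic margin, and the proof is finished by verifying the single-variable inequality $G(\mu)<I_1(\mu)$ for all $\mu\ge\mu_0=5.0833$, e.g.\ by checking it at $\mu_0$ and controlling the sign of $I_1-G$ for larger $\mu$.

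I expect the main obstacle to be the integral comparison for $\sum_{m\ge3}p(m)$: the summand is \emph{not} monotone but unimodal, with interior maximum at $m^{*}=\tfrac14\bigl(2+\sqrt{4\mu^2-1}\bigr)$, which for $\mu$ near $\mu_0$ sits just above $m=3$. Handling this peak uniformly in $\mu$ — either by adding an explicit $O(\mu^{-4})$ correction for the maximal term or by starting the integral test one index beyond $\lceil m^{*}\rceil$ — is the delicate part, and it is this bookkeeping (rather than any deep idea) that fixes the precise threshold $\mu_0=5.0833$ at which the resulting elementary inequality first holds.
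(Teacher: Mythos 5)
Your proposal is correct and follows the same skeleton as the paper's Appendix: both start from the exact identity~(\ref{int-sum1}), reduce~(\ref{ineqS2}) to the statement that the boundary term $\int_{a_1}^{a_2}f\,dx-\tfrac12 a_2f(a_2)$ dominates the (positive) sum of trapezoidal remainders, and both finish with an explicit one-variable inequality whose first root determines $\mu_0$. Where you genuinely diverge is in how the remainder sum is estimated. The paper uses the mean-value form $R_{a,b}(f)=-\tfrac{(b-a)^3}{12}f''(\xi)$, bounds $\xi_n$ from below by $((n-1)/\mu)^2$, shifts the index, expands $(n+1)^3$ binomially, and controls the four resulting Riemann sums $\mu^{-1}\sum_n g_j(n/\mu)$ each by a maximum-plus-integral argument; it also replaces the boundary term by the cruder lower bound $t-2kt^2$, $t=\mu^{-2}$. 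You instead keep the boundary term in closed form, use the Peano-kernel representation of the trapezoidal error together with $(x-a_n)(a_{n+1}-x)\le n^2/\mu^4$, telescope via $\int_{a_n}^{a_{n+1}} f''=f'(a_{n+1})-f'(a_n)$, sum by parts, and exploit the exact antiderivative of $(2m-1)(m^2-m+\mu^2)^{-5/2}$ to reduce everything to a single integral test. This is cleaner (one series instead of four, no $\xi_n$ bookkeeping) and asymptotically sharper: your bound $G(\mu)\sim\tfrac1{2\mu^2}$ against $I_1(\mu)\sim\mu^{-2}$ leaves a factor-two margin, so your threshold would come out below $5.0833$, shrinking the interval left to numerical verification in Proposition~\ref{P:S2}. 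The only point to tighten is the integral comparison for the unimodal series: with the peak $m^{*}$ possibly interior, the safe correction is \emph{two} copies of $\max_m p(m)$ rather than one, since up to two integers can fall in the window where neither $p(n)\le\int_{n}^{n+1}p$ nor $p(n)\le\int_{n-1}^{n}p$ applies; this correction is still $O(\mu^{-4})$ and is absorbed by your margin, so it does not affect the conclusion.
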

\begin{proof} It follows from~(\ref{int-sum1}) that we have
to show that for $f(x)=1/(x+1)^k$ and $\mu\ge\mu_0$
\begin{equation}\label{int-sum}
\int_{a_1}^{a_2}f(x)dx- a_2f(a_2)/2\,>\, -\sum_{n=2}^\infty
R_{a_n,a_{n+1}}(f),
\end{equation}
the main task being specifying $\mu_0$. Since $f(x)$ is
monotone decreasing,
 $\int_{a_1}^{a_2}f(x)dx>a_2f(a_2)$, and
 the left-hand side is greater than
 \begin{equation}\label{int-sum-left}
\frac1{\mu^2}\frac1{(1+\frac2{\mu^2})^k}>
\frac1{\mu^2}\left(1-\frac{2k}{\mu^2}\right)=
t-2kt^2=:L_k(t),\ t=\mu^{-2}.
\end{equation}
For the right-hand side of~(\ref{int-sum}) with
$f''(x)=k(k+1)/(x+1)^{k+2}$  and
$\xi$ in~(\ref{ksi}) satisfying $ \xi>n(n-1)/\mu^2>((n-1)/\mu)^2$ we have
\begin{equation}
\aligned -\sum_{n=2}^\infty R_{a_n,a_{n+1}}(f)=
\frac{2k(k+1)}{3\mu^2}\frac1\mu\sum_{n=2}^\infty
\frac{(n/\mu)^3}{(\xi_n+1)^{k+2}},
\endaligned
\end{equation}
and
$$
\aligned
 \frac1\mu\sum_{n=2}^\infty
\frac{(n/\mu)^3}{(\xi_n+1)^{k+2}}<\frac1\mu\sum_{n=1}^\infty
\frac{((n+1)/\mu)^3}{((n/\mu)^2+1)^{k+2}}=\\=
\frac1\mu\sum_{n=1}^\infty
g_1(n/\mu)+\frac3{\mu^2}\sum_{n=1}^\infty
g_2(n/\mu)+\frac3{\mu^3}\sum_{n=1}^\infty
g_3(n/\mu)+\frac1{\mu^4}\sum_{n=1}^\infty g_4(n/\mu),
\endaligned
$$
where
$
g_j(x)=\frac{x^{4-j}}{(x^2+1)^{k+2}}
$, $j=1,2,3,4$.
 The function $g_1(x)$ has a unique global maximum attained at
$
x_0=\left(\frac3{2k+1}\right)^{1/2}.
$
Therefore
$$
\aligned
 \frac1\mu\sum_{n=1}^\infty
g_1(n/\mu)< x_0g_1(x_0)+\int_{x_0}^\infty g_1(x)dx=\\=
\frac{9(2k+1)^k}{(2k+4)^{k+2}}+\frac1{2k(k+1)}
\frac{(5k+4)(2k+1)^k}{(2k+4)^{k+1}}=:G_1(k).
\endaligned
$$
Similarly (replacing $x_0$ in the integral by $0$)
$$
\aligned
\frac1\mu\sum_{n=1}^\infty g_2(n/\mu)<
\frac{(k+1)^{k+1/2}}{(k+2)^{k+2}}+\frac12
\frac{\Gamma(3/2)\Gamma(k+1/2)}{\Gamma(k+2)}=:G_2(k),\\
\frac1\mu\sum_{n=1}^\infty g_3(n/\mu)<
\frac{(2k+3)^{k+3/2}}{(2k+4)^{k+2}}+\frac12
\frac{1}{k+1}=:G_3(k),\\
\frac1\mu\sum_{n=1}^\infty g_4(n/\mu)<
\frac12
\frac{\Gamma(1/2)\Gamma(k+3/2)}{\Gamma(k+2)}=:G_4(k).
\endaligned
$$
which gives that the right-hand side in~(\ref{int-sum}) is
less than
$$
\frac{2k(k+1)}{3}\left(G_1(k)t+3G_2(k)t^{3/2}+3G_3(k)t^2+G_1(k)t^{5/2}
\right)=:R_k(t)
$$
and
$ R_{3/2}(t)=0.5317\cdot t+1.5844\cdot t^{3/2}+
3.2851\cdot t^2+1.3333\cdot t^{5/2}$.
Obviously, $L_{3/2}(t)=t-3t^2\ge R_{3/2}(t)$ for $t\in[0,t_0]$,
where $t_0$ is the first root of the equation
$L_{3/2}(t)- R_{3/2}(t)=0$.
We find that $t_0=0.0387$. Accordingly, (\ref{int-sum})
holds for all $\mu\ge\mu_0=(1/t_0)^{1/2}=5.0833$.
Explicitly calculating the integral on the left-hand side of
(\ref{int-sum}) and estimating the series involving
$g_2$ and $g_3$ in the same way as $g_1$ we
have $R_{3/2}(t)=0.5317\cdot t+0.90074\cdot t^{3/2}+ 2.8054\cdot t^2+
1.3333\cdot t^{5/2}$  and therefore can
 improve the estimate: $\mu_0=3.9229$.
\end{proof}

\subsection*{Acknowledgments}
The author thanks S.V.\,Zelik for helpful discussions.
 This work was supported  by  the RFBR  grants no.~09-01-00288,
 no.~11-01-00339 and by  the RAS Programme no.1.

\bibliographystyle{amsplain}

\end{document}